 \numberwithin{equation}{section}
 \newtheorem{remark}{Remark}[section]
 \newtheorem{assumption}{Assumption}[section]
\def\al{\alpha}
\def\be{\beta}
\def\Ga{\Gamma}
\def\Om{\Omega}
\def\pa{\partial}
\def\sig{\sigma}
\def\R{\mathbb{R}}
\def\fa{\forall}
\def\grad{\bigtriangledown}
\def\Alg{\mathcal{A}}
\def\Bor{\mathcal{B}}
\def\Triag{\mathscr{T}}
\def\Alg{\mathcal{A}}
\def\Bor{\mathcal{B}}
\def\Triag{\mathscr{T}}
\def\argmin{\text{argmin}}
\def\diam{\text{diam}}
\def\dist{\text{dist}}
\def\sgn{\text{sgn}}
\begin{document}
	
		\title{$\Gamma$-convergence of an Enhanced Finite Element Method for Mani\`a's and Foss's Problems Exhibiting  the Lavrentiev Gap Phenomenon}
	    \markboth{XIAOBING FENG AND JOSHUA M. SIKTAR}{An Enhanced Finite Element Method for Lavrentiev Gaps}

	\author{
		Xiaobing Feng\thanks{Department of Mathematics, The University of Tennessee, Knoxville, TN 37996, U.S.A. (xfeng@utk.edu). } 
		\and 
	Joshua M. Sitkar\thanks{Department of Mathematics, The University of Tennessee, Knoxville, TN 37996, U.S.A. {\it Current address:} Department of Mathematics, Texas A \& M University, College Station, TX 77843, U.S.A.} }
	
	\date{ }
	
 \maketitle

	\begin{abstract} 
		 	It is well-known that numerically approximating calculus of variations problems possessing a Lavrentiev Gap Phenomenon (LGP) is challenging, and the standard numerical methodologies, such as finite element, finite difference, and discontinuous Galerkin methods, fail to give convergent methods because they cannot overcome the gap.  This paper is a continuation of \cite{feng2016enhanced}, where a promising enhanced finite element method was proposed to overcome the LGP in the classical Mani\`a's problem.  The first goal of this paper is to provide a complete $\Gamma$-convergence proof for this enhanced finite element method, hence, establishing a theoretical foundation for the method. The crux of the convergence analysis is taking advantage of the regularity of the minimizer and viewing the minimization problem as posed over the fractional Sobolev space  $W^{1 + s, p}(0, 1)$ (for $s > 0$) rather than the original admissible space $W^{1, p}(0, 1)$. The second goal is to extend the enhanced finite element method to the 
		 	two-dimensional Foss's problem  \cite{foss2003lavrentiev} from nonlinear elasticity, which is also known to possess the LGP, and to establish its $\Gamma$-convergence as well. 
	\end{abstract}
	
	\begin{keywords}
		 Calculus of variations, Mani\`a's problem, Foss's problem, Lavrentiev gap phenomenon (LGP), finite element method, approximate functional, $\Gamma$-convergence.
	\end{keywords}
	
	\begin{AMS}
		 65K10, 65K99, 65M60
	\end{AMS}
	

\section{Introduction}\label{Sec: intro}

The Mani\`a's problem refers to the following calculus of variations problem:
\begin{equation}\label{Eq: Mania}
    u  = \underset{v\in \Alg }{\argmin} \bigg\{J(v)  := \int^{1}_{0}  v'(x)^6 \bigl( v(x)^3-x \bigr)^2\, dx\bigg\}, 
\end{equation}
where $\Alg := \{u \in W^{1, 1}(0, 1) \ | \ u(0) = 0, \ u(1) = 1, \ J(u)<\infty \}$ (see Section \ref{Sec: notationAndPreliminaries} for notation).  
This problem is infamous for being one of the simplest that possesses the {\em Lavrentiev Gap Phenomenon} (LGP) in the sense that the following strict inequality holds (see \cite{mania1934sopra} or, more recently, \cite[Example 2.14]{Rin} for a proof):
\begin{equation}\label{Lav}
    \min_{u \in \Alg}J(u) \ < \ \inf_{u \in \Alg \cap W^{1, \infty}(0, 1)}J(u).
\end{equation}
Noticing that $W^{1, \infty}(0, 1)$ is a dense subset of $W^{1, 1}(0, 1)$,  hence, the LGP says that the same functional can have different minimum values (hence, different minimizers) over two nested admissible sets. 
	It is not hard to show that the minimizer of \eqref{Eq: Mania} is $\overline{u}(x) := x^{\frac{1}{3}}$, for which $J(\overline{u}) = 0$ (cf. \cite{ball1987one}).  
		It is easy to check that  $\overline{u}\in W^{1 + s, p}(0, 1)$ for $0\leq s<1/3$ and $1\leq p< 3/2$ when $(2/3 + s)p < 1$. 
		Consequently,  the minimizer of $J$ over $X:=\Alg \cap W^{1 + s, p}(0, 1)$ 
		is also $\overline{u}$ for the same range of $s$ and $p$.

The LGP is a difficult issue to address in the calculus of variations because it can occur not only for non-convex energy functionals but also for convex ones. Moreover, no general characterization of the LGP is known in the literature; consequently, it is often studied on a case-by-case basis. The LGP is of great interest in analysis, as it provides difficult and intriguing problems to study, in addition to having important applications in materials science.  It is also of great interest in numerical analysis because developing robust and convergent numerical methods for such problems has practical significance in providing means to compute their solutions. However, it is known that achieving such a goal is not only difficult but also intriguing. For example, it is well known that the standard finite element method fails to give convergence to a minimizer: that is, if $X_h$ denotes a  conforming linear  finite element space with respect to the uniform mesh over $(0,1)$ with mesh size $h>0$ that matches the boundary conditions of $\Alg$, then
\begin{equation}\label{noApprox}
    \lim_{h \rightarrow 0^+}\min_{u \in X_h}J(u_h) \ \nrightarrow  \ \min_{u \in \Alg}J(u)
\end{equation}
because $X_h\subset \Alg \cap W^{1, \infty}(0, 1)$.  We also refer the reader to  \cite{carstensen2009computation} for a discussion of this issue for general conforming finite element methods, and \cite{ortner2011nonconforming} for an analogous discussion regarding non-conforming finite element methods. An important message from this discussion is that if we want to use a conforming finite element method to overcome the LGP, we cannot use the original functional $J$ as an approximate functional to design a convergent numerical method. 

It is also well-understood that the cause of \eqref{noApprox} boils down to the fact that the factor $|u_h'|^6$ in the density function grows large too quickly as $h \rightarrow 0^+$ approaches the left-endpoint $x=0$ compared to the rate at which the other factor goes to zero. The papers \cite{ball1987numerical, li1995numerical, bai2006truncation} utilized a truncation method to approximate the singular minimizer. Another interesting numerical technique is the element removal method \cite{li1992element, li1995element}, which successfully approximated the true minimizer when given a priori information on the location of singularities in that minimizer.
 There were also additional papers surveying the analysis of the LGP in nonlinear elasticity problems \cite{almi2024new, foss2003examples}, in cavitation \cite{ball1987one, ball2002some}, and in microstructure theory \cite{winter1996lavrentiev}. More recent works \cite{balci2020new, balci2021lavrentiev, balci2022crouzeix}, meanwhile, have analyzed the LGP for nonlocal energy functionals.
In another more recent dissection of \eqref{Lav},  Feng and Schnake proposed in \cite{feng2016enhanced, schnake2017numerical} an enhanced finite element method using a cutoff technique. Their idea was to introduce an approximate functional that has a mechanism to limit the pointwise values of the derivative of the input function while still approximating the functional $J$ in some sense. Specifically, for some $\al > 0$ and $h > 0$, they introduced a cut-off function defined as
\begin{equation}\label{cutoff}
    \chi^{\al}_h(t) \ := \ \sgn(t)\min\{|t|, h^{-\al}\}, 
\end{equation}
for scalar-valued functions, and
\begin{equation}\label{Eq: cutoffVec}
    (\chi^{\al}_h(w_i)) \ := \ \sgn(w_i)\min\{|w_i|, h^{-\al}\},  \qquad  i=1,2,\cdots, d, 
\end{equation}
component-wise when $w$ is a $d$-vector.
Meanwhile, the corresponding approximate functional was defined as
\begin{equation}\label{cutoffMania}
    J^{\al}_h(v) \ := \ \int^{1}_{0} \bigl(\chi^{\al}_h(v'(x)) \bigr)^6 (v(x)^3 - x)^2\,dx,
\end{equation}
which led to the following enhanced finite element method: 
\begin{equation}\label{enhanced}
	\overline{u}_h \ = \ \underset{v_h\in X_h }{\argmin} \, J^{\alpha}_h(v_h). 
\end{equation}
We note that choices for the parameter $\alpha$ depend only on the structural properties of the density function of Mani\`a's functional and the regularity of the minimizer, but not on the locations of singularities or other properties. 
Extensive numerical experiments presented in \cite{feng2016enhanced, schnake2017numerical} are quite promising, as they all point to the convergence of this enhanced finite element method. However, to settle the convergence issue, a rigorous proof of the convergence of the minimizer of \eqref{enhanced} to the minimizer of \eqref{Eq: Mania} is necessary. One pertinent mathematical framework is the $\Gamma$-convergence theory \cite{dal2012introduction}. 

This framework has appeared in many instances in the calculus of variations, including studying the convergence of discretizations for finding equilibrium states of nematic liquid crystals through the Landau-de Gennes Model; see \cite{borthagaray2020structure, nochetto2017finite, nochetto2022gamma}. This method is an alternative error estimation technique for problems where compactness techniques, through a priori estimates, may not be readily available.
  
Meanwhile, the $\Gamma$-convergence analysis of \eqref{enhanced} was indeed pursued and partially succeeded in \cite{feng2016enhanced, schnake2017numerical}. Specifically, these works attempted to prove $\Ga$-convergence of $J^{\al}_h$ to $J$ with respect to the weak $W^{1, 1}$ topology as $h \rightarrow 0^+$, for some fixed $\al > 0$, and the {\em lim-inf} inequality was successfully established  (see \cite[Lemma 4.1]{schnake2017numerical}). 
However, the construction of a suitable recovery sequence has remained open. It should be noted that a $\Ga$-limit necessarily exists for at least a sub-sequence of $\{J^{\al}_h\}_{h > 0}$ owing to the compactness of $\Gamma$-convergence \cite[Proposition 1.42]{braides2002gamma}.

 The first main objective of this paper is to pick up where \cite{feng2016enhanced, schnake2017numerical} stalled and to complete the $\Ga$-convergence proof for the approximate functional $J^{\al}_h$, albeit with respect to the strong  
$W^{1, p}$-topology. We may use the standard finite element nodal interpolant as the recovery sequence, but we also leverage the often overlooked higher-order differentiability of continuous piecewise polynomials. Indeed, $X_h \subset W^{1 + s, p}$ for $0\leq s p< 1$ and $1< p\leq \infty$, which has been known for $p = 2, \infty$  (see \cite{bramble1991analysis, xu1989theory}). 
 We extend this result to all $p>1$. 
 Moreover, since  the minimizer $\overline{u}(x) = x^{\frac{1}{3}}$ of \eqref{Eq: Mania} belongs to the fractional Sobolev space $W^{1 + s, p}$ for $0\leq s<1/3$ and $1\leq p< 3/2$ when $(2/3 + s)p < 1$,
 It is natural to consider the fractional Sobolev spaces. 
 However, although $X_h \subset W^{1 + s, p}$,  $X_h$ is not dense in $W^{1 + s, p}$ unless $s=0$ 
 which seems to pose a technical difficulty in establishing the $\Ga$-convergence in the strong   $W^{1, p}$-topology.
 %
It turns out that  this difficulty can be mitigated because the density is not necessary. Moreover, the additional
regularity of the minimizer justifies regarding problem \eqref{Eq: Mania} as  the minimization of $J$ over $X:=\Alg\cap W^{1 + s, p}(0, 1)$ rather than over $\Alg$, which,  combined with \eqref{cutoffMania} and the explicit $O(h^{-s})$ blowup rate in the $W^{1 + s, p}$-norm for the finite element function (thanks to the generalized inverse inequality, see \eqref{Eq: InverseFracGen}), mitigates singularities in the derivative. This is sufficient to complete the $\Ga$-convergence proof.  It should be emphasized that, although this recasting of the problem is crucial for our convergence analysis, it has no impact on the numerical method
  and its implementation. Therefore, it is only an analytical technique for the convergence proof.
 
 	The second main objective is to extend the enhanced finite element method and the $\Gamma$-convergence result to the 
 	two-dimensional Foss's problem from nonlinear elasticity, which is also known to possess the LGP. Recall that Foss's functional is
\begin{equation}\label{Eq: Foss}
    F(v) \ := \ 66\left(\frac{13}{14}\right)^{14}\int_{\Om}\left(\frac{y}{y - 1}\right)^{14}|v|^{\frac{14 - 3y}{y - 1}}(|v|^{\frac{y}{y - 1}} - x)^2(v_x)^{14}dxdy,
\end{equation}
originally introduced in \cite{foss2001lavrentiev, foss2003lavrentiev}. Here $\Om := (0, 1) \times \left(\frac{3}{2}, \frac{5}{2}\right) \subset \R^2$. The admissible space for this functional is
\begin{equation*}
    \Bor \ := \ \{v \in W^{1, 1}(\Om) \cap L^{\infty}(\Om) \ | \ v(0, y) = 0, \ v(1, y) = 1, \ F(v) < \infty\},
\end{equation*}
and we will discuss other properties of this problem in Section \ref{facts}. For now, we simply remark that both \eqref{Eq: Mania} and \eqref{Eq: Foss} contain factors involving the input function multiplied by powers of its first-order derivatives, allowing us to use the same blueprint to navigate the technical details of the required convergence proofs.

The remainder of this paper is organized as follows. In Sectiosn \ref{Sec: notationAndPreliminaries} and \ref{eFEM}, we introduce the required space and function notation, the definition of finite element spaces, and a few facts about Mani\`a's and Foss's problems.  Section \ref{Sec: GaConv} is devoted to proving the desired  $\Ga$-convergence result for Mani\'a's problem, while Section \ref{Sec: GaConvFoss} carries out the same task for Foss's problem. Next, Section \ref{Sec: Numerics} presents additional numerical result beyond that provided in \cite{feng2016enhanced, schnake2017numerical}. Finally, we discuss other LGP problems to be explored in Section \ref{Sec: future}. 



\section{Preliminaries}\label{Sec: notationAndPreliminaries}


\subsection{Function spaces}\label{Subsec: FuncSpaces}

Let $\Omega\subset \mathbb{R}^n$ be a bounded domain. For integer $k\geq 0$ and real number $1\leq p <\infty$, $W^{k,p}(\Omega)$ denotes the Sobolev space consisting of all functions whose weak partial derivatives up to order $k$ belong to $L^p(\Omega)$. Note that $W^{0,p}(\Omega)=L^p(\Omega)$. We also define, for any real number $r > 0$, setting $k:=[r]$,   the Fractional Sobolev norm by 
\begin{equation}\label{Eq: Hr}
    \|v\|^p_{W^{r, p}(\Om)}  :=  \|v\|^p_{W^{k, p}(\Om)} + \sum_{|\al| = k}\int_{\Om}\int_{\Om}\frac{|\pa^{\al}v(x) - \pa^{\al}v(y)|^p}{|x - y|^{n + p(r - k)}}dxdy.
\end{equation}
 The second term on the right-hand side will be abbreviated as $[v]^p_{W^{k, p}(\Om)}$, and when $r=k$ is an integer, the definition \eqref{Eq: Hr} becomes 
\begin{equation}\label{Eq: Hk}
    \|v\|^p_{W^{k, p}(\Om)} :=  \sum_{|\al| \leq k}\|\pa^{\al}v\|^p_{L^p(\Om)}.
\end{equation}
Then $W^{r, p}(\Om)$ denotes the Banach space endowed with the norm defined in \eqref{Eq: Hr}.  When $p=2$, we set $H^r(\Omega):= W^{r,2}(\Omega)$. 
 For $0 < s < 1$ and $1 \leq p \leq \infty$, we have 
	\begin{equation}\label{W^1,p}
		W^{1 + s, p}(\Om)  :=  \{u: \Om \rightarrow \R \ | \ u \in L^p(\Om), \ \grad u \in W^{s, p}(\Om; \R^n)\}.
	\end{equation}
These spaces are further discussed in \cite{adams2003sobolev, demengel2012functional, di2012hitchhikers, leoni2023first}.
In this paper,  we only consider two special cases of domains: (i) $n=1$ and $\Omega=(0,1)$; (ii) $n=2$ and   $\Om = (0, 1) \times \left(\frac{3}{2}, \frac{5}{2}\right)$. 

Throughout this paper, $C$ will be used to denote a generic positive constant that is independent of the mesh size $h$. The notation $\alpha\lesssim \beta$ stands for $\alpha\leq C \beta$ for some $C>0$. For any $1<p<\infty$, we define its conjugate  as $p':=p/(p-1)$. 
 
\subsection{Facts about Mani\`a's and Foss's problems}\label{facts}
Recall that the Mani\`a's and Foss's functionals $J$ and  $F$ are given by  \eqref{Eq: Mania} and \eqref{Eq: Foss}, respectively.  Their respective admissible spaces are $\Alg$ and $\Bor$, namely,
\begin{eqnarray*}
\Alg &:=& \{u \in W^{1, 1}(0, 1) \ | \ u(0) = 0, \ u(1) = 1, \ J(u)<\infty\},   \\
\Bor & := & \{v \in W^{1, 1}(\Om) \cap L^{\infty}(\Om) \ | \ v(0, y) = 0, \ v(1, y) = 1, \ F(v) < \infty\}, 
\end{eqnarray*} 
where $\Om := (0, 1) \times \left(\frac{3}{2}, \frac{5}{2}\right)$ is in the definition of $\Bor$. 
	
It has already been noted in Section \ref{Sec: intro} that  the minimizer for Mani\`a's problem is  $\overline{u}(x) = x^{\frac{1}{3}}$, and it is easy to verify that $\overline{u}$ belongs to the fractional Sobolev space $W^{1 + s, p}$ for $0\leq s<1/3$ and $1\leq p< 3/2$ where $(2/3 + s)p < 1$.  
For Foss's problem, it is known that the  minimizer is $\overline{v}(x, y):= x^{\frac{y - 1}{y}}$ and 
$\overline{v}$ belongs to $W^{1 + s, p}(\Om)$ for $1 \leq p < \frac{3}{2}$ and $s < \frac{1}{p} - \frac{2}{3}$. 
We will prove the latter claim in Appendix \ref{appendixA}. It should be  noted that the exact forms or expressions of these minimizers are not required or used in our enhanced finite element methods and convergence analysis. The only information about the minimizers used in our convergence analysis (to be given in the subsequent sections) is a slightly higher differentiablity than $W^{1,1}$ offers.

\section{Enhanced finite element methods and fractional inverse inequalities} \label{eFEM} 

\subsection{Finite element spaces and interpolations}\label{FE-spaces}
Let $N>>1$ be an integer, define $h=1/N$ and $x_j:=jh$ for $j=0,1,2,\cdots, N$. Then, $\{ x_j\}_{j=0}^N$ is a uniform mesh over $[0,1]$ with mesh size $h$.  Also introduced in Section \ref{Sec: intro}, $X_h$ denotes the conforming linear finite element space over the mesh  $\{ x_j\}_{j=0}^N$ that preserves the boundary conditions of $\Alg$.  Specifically, 
\begin{equation}\label{CtsPWLin}
    X_h := \ \bigl\{v_h \in C(0, 1)\ | \  \ v_h|_{I_k} \in \mathcal{P}_1(I_k) \,\, \fa 0\leq k \leq N - 1\,, \ v_h(0) = 0, \ v_h(1) = 1 \bigr\},
\end{equation}
where $I_k:=(x_k, x_{k+1})$ and $\mathcal{P}_1(I_k)$ denote the set of all linear functions on $I_k$. 

Similarly, let $\{\Triag_h\}_{h > 0}$ be a family of quasi-uniform triangular meshes over the rectangular  domain 
$\Om = (0, 1) \times \left(\frac{3}{2}, \frac{5}{2}\right)$ with mesh size $h>0$, and let $Y_h$ denote the conforming linear finite element space over $\Triag_h$ that preserves the boundary conditions of $\Bor$, which is the two-dimensional analog of \eqref{CtsPWLin}, namely
\begin{equation}\label{Eq: CtsPWLin2D}
	Y_h \ := \ \{v_h \in C(\Om) \ | \ v_h|_T \in \mathcal{P}_1(T) \ \fa T \in \Triag_h, \ v_h(0, \cdot) \ = \ 0, \ v_h(1, \cdot) \ = \ 1\},
\end{equation}
where $\mathcal{P}_1(T)$ denotes the set of linear functions on $T$. Notice $Y_h \subset \Bor$ for every $h > 0$.

Let $\{\phi_m\}^{M}_{m= 1}$ denote the nodal basis of the finite element space $\Alg$ (resp. $\Bor$) with $M=\mbox{dim}(X_h)$ (resp. $M=\mbox{dim}(Y_h)$). For each $v \in C^0(\Omega)$,  we define its nodal interpolation  $I_hv\in X_h$ (or $Y_h$) as follows: 
\begin{equation}\label{Eq: Noda-xl}
	I_h v(x) \ := \ \sum^{M}_{m=1}\phi_m(x)v(x).
\end{equation}
Then the following properties of $I_h$ are well-known (cf.  \cite{ern2004theory}).

\begin{lemma}\label{lemma3.1} 
 	Let $\Omega=(0,1)$ or $\Omega=(0, 1) \times \left(\frac{3}{2}, \frac{5}{2}\right)$ and $v \in W^{r, p}(\Om)$ for $0 < r \leq 2$ and $1 \leq p < \infty$,  then $I_h$ satisfies  the following properties:
 \begin{eqnarray} \label{property-1}
 		\|I_h v\|_{L^{\infty}(\Om)}  &\lesssim& \|v\|_{L^{\infty}(\Om)}, \qquad\mbox{if  $1<r\leq 2$},  \\
 	 \label{property-2}
 		\|v - I_hv\|_{L^p(\Om)} &\lesssim& h^r[v]_{W^{r, p}(\Om)},\\
   \label{property-3}
 		\|v - I_hv\|_{W^{1, p}(\Om)} &\lesssim& h^{r-1} [v]_{W^{r, p}(\Om)} \qquad\mbox{if  $1<r\leq 2$}. 
 \end{eqnarray}
 If we write $r=1+s$ for $0< s\leq 1$, the right-hand side of \eqref{property-3} becomes $h^s[v]_{W^{1 + s, p}(\Om)}$. 
\end{lemma}

We note that all functions in $\Alg \cap W^{1 + s, p}(\Omega)$ and $\Bor\cap W^{1 + s, p}(\Omega)$ are H\"older continuous; hence, they belong to $L^{\infty}(\Omega)$ (cf. \cite{leoni2024first}).  Therefore, \eqref{property-1} makes sense. 

\smallskip
 \begin{remark} \label{rem3.1}
 For our $\Gamma$-convergence analysis to be presented later, we will not need the special structure of the finite element interpolant, but rather just its stability and approximation properties. Therefore, $I_h$ can be replaced by any operator $K_h: \Alg \ (\mbox{\rm resp. } \Bor) \to X_h \ (\mbox{\rm resp. } Y_h)$,  which satisfies \eqref{property-1}--\eqref{property-3}. 
%
\end{remark} 

\subsection{Enhanced finite element methods} \label{eFEMs} 
We first recall that the enhanced finite element method for Mani\`a's problem is defined in \eqref{enhanced}. It minimizes the numerical energy functional $J^\alpha_h$ (see  \eqref{cutoffMania})  over the finite element space $X_h$.  

To define the enhanced finite element method for Foss's problem,  let $\be > 0$ be a fixed constant.  we define a discrete analog $F^{\be}_h: Y_h \rightarrow \R$ of $F$ as follows: 
\begin{equation}\label{Eq: FossCutoff}
    F^{\be}_h(v) := 66\left(\frac{13}{14}\right)^{14}\int_{\Om}\left(\frac{y}{y - 1}\right)^{14}|v|^{\frac{14 - 3y}{y - 1}}(|v|^{\frac{y}{y - 1}} - x)^2(\chi^{\al}_h v_x)^{14}\, dxdy.
\end{equation}
Then, the enhanced finite element method for Foss's problem is defined as 
\begin{equation}\label{Eq: enhancedFoss}
	\overline{v}_h = \underset{v_h\in Y_h}{\argmin} \, F^{\be}_h(v_h). 
\end{equation}

\begin{remark}
We note that  since $Y_h\subset W^{1,\infty}(\Omega)$, the Lavrentiev gap implies that  (cf. \cite[Theorem 6.1]{foss2003examples})
\begin{equation}\label{Eq: FossLavGap}
	\inf_{v \in \Bor}F(v) <	\inf_{v \in W^{1,\infty}} F(v)  \leq \inf_{v_h \in Y_h}F(v_h). 
\end{equation}
Thus, the standard finite element method does not converge for Foss's problem, which is expected.  However, unlike in Mani\`a's problem, we do not automatically gain that admissible functions for $F(\cdot)$ belong to $L^{\infty}(\Om)$ through Sobolev embeddings (especially, $\overline{v} \notin W^{1, p}(\Om)$ for any $p > 2$), so we make this inclusion explicit in the definition of $\Bor$. In turn, we may include the case $p = 1$ here.
\end{remark}


\subsection{Fractional inverse inequalities}\label{Subsec: FracInverse}
The goal of this section is to establish general inverse inequalities for finite element functions with respect to the norm of the fractional Sobolev space $W^{s,p}$. These results will be crucially used in our $\Gamma$-convergence proofs. Moreover, they also have a great deal of independent interest in numerical PDEs. To this end, we first state our mesh assumptions required to ensure our fractional inverse inequalities

\begin{assumption}[Mesh assumptions for inverse inequalities]\label{Assump: Inverse}
    For the setup of our general inverse inequalities, we make the following mesh assumptions:
    \begin{enumerate}\label{Enum: In}
        \item[\rm (a)] Let $\Om \subset \R^n$ be a bounded polygonal domain with boundary $\pa \Om$.
         \item[\rm (b)] Let $\{\Triag_h\}_{h > 0}$ be a shape-regular family of meshes with shape regularity parameter $\sig$.  Since $\Om$ is polygonal, then $\Om = \cup_{T \in \Triag_h}T$.
        \item[\rm (c)]  Define the mesh size $h := \max_{T \in \Triag_h}\diam(T)$ for the mesh $\Triag_h$ (where $\diam(T)$ denotes the diameter of triangle $T$).
    \end{enumerate}
\end{assumption}
We note that the assumption of $\Om$ being polygonal is not strictly necessary and is included for the sake of simplicity. Notice also that the setups provided in Subsections \ref{FE-spaces} and \ref{eFEMs} satisfy these assumptions. Also, a fractional inverse inequality is already known in the case $p = 2$; its proof can be found in  \cite[Theorem 3.1]{xu1989theory}. For completeness, we quote the result in the following theorem. 

\begin{theorem}[Fractional inverse inequality, $p = 2$]\label{Thm: InverseInequality}
    Let $m \in \mathbb{N}^+$ and $v \in \mathcal{P}_m(\Om)$, the set of continuous piecewise polynomials of degree at most $m$ with respect to the mesh $\Triag_h$. Then, for any $0 \leq \gamma < 1/2$,  there holds the inequality
    \begin{equation}\label{Eq: XuInverse}
        [v]_{H^{1 + \gamma}(\Om)} \ \lesssim \ h^{-\gamma}\|v\|_{H^1(\Om)}.
    \end{equation}
\end{theorem}

We note that the above estimate immediately implies that $Y_h\subset H^{1+\gamma}(\Om)$. However, it is well-known that $Y_h$ is not dense in $H^{1+\gamma}(\Om)$ for $\gamma >0$.
This estimate is most relevant in the case where the function to be approximated belongs only to $H^1$. Since we know a priori that the minimizers of $J$ and $F$ are better than the $H^1$-functions (see Subsection \ref{facts}),  we need to deal with functions in certain fractional Sobolev spaces $W^{s, p}$. Thus, we want to generalize Theorem \ref{Thm: InverseInequality} to the fractional Sobolev spaces $W^{s, p}$, and present the results on a more general domain and family of meshes for the sake of enhancing versatility.

\smallskip

\begin{theorem}[Generalized fractional inverse inequality]\label{Thm: GenFracInverse}
    Let $m \in \mathbb{N}^+$ and $v \in \mathcal{P}_m(\Om)$, the set of continuous piecewise polynomials of degree at most $m$ with respect to the mesh $\Triag_h$. Suppose that Assumption \ref{Assump: Inverse} holds and that $0<s<1$, $1 \leq p < \infty$ are such that $sp < 1$. Then there holds
    \begin{equation}\label{Eq: GenFracInverse}
        [v]_{W^{s, p}(\Om)} \ \lesssim \ h^{-s}\|v\|_{L^p(\Om)}.
    \end{equation}
\end{theorem}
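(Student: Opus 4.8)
The plan is to establish the equivalent $p$-th power statement $[v]^p_{W^{s,p}(\Om)}\lesssim h^{-ps}\|v\|^p_{L^p(\Om)}$ and then take $p$-th roots. First I would split the double integral defining the seminorm into a near-diagonal piece and a far piece according to whether $|x-y|<h$ or $|x-y|\geq h$:
$$[v]^p_{W^{s,p}(\Om)} = \left(\int_{\Om}\int_{\Om\cap\{|x-y|<h\}} + \int_{\Om}\int_{\Om\cap\{|x-y|\geq h\}}\right)\frac{|v(x)-v(y)|^p}{|x-y|^{n+ps}}\,dy\,dx =: B + A.$$

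The far piece $A$ is routine. I would use $|v(x)-v(y)|^p\leq 2^{p-1}(|v(x)|^p+|v(y)|^p)$ and symmetry in $x,y$ to reduce to $A\lesssim \int_\Om|v(x)|^p\big(\int_{|x-y|\geq h}|x-y|^{-n-ps}\,dy\big)\,dx$. Since $ps>0$, the inner radial integral over $|x-y|\geq h$ converges and is bounded by a constant multiple of $h^{-ps}$, whence $A\lesssim h^{-ps}\|v\|^p_{L^p(\Om)}$ with a constant depending only on $n,p,s$.

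The near piece $B$ is the crux, and here I would exploit that a continuous piecewise polynomial is globally Lipschitz. Along the segment $[x,y]$ the function $v$ is absolutely continuous, so $|v(x)-v(y)|\leq |x-y|\,\sup_{z\in[x,y]}|\nabla v(z)|$, and since $|x-y|<h$, shape-regularity guarantees the segment meets only a bounded number (depending on $\sigma$) of elements; bounding the supremum by $\max_{T'\in\omega_x}\|\nabla v\|_{L^\infty(T')}$ over the local patch $\omega_x$ gives
$$B\lesssim \int_\Om \|\nabla v\|^p_{L^\infty(\omega_x)}\Big(\int_{|x-y|<h}|x-y|^{p(1-s)-n}\,dy\Big)\,dx \lesssim h^{p(1-s)}\int_\Om\|\nabla v\|^p_{L^\infty(\omega_x)}\,dx,$$
where the radial integral converges precisely because $s<1$. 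It then remains to absorb the gradient: summing over elements and invoking the single-element inverse estimate for polynomials of degree at most $m$, namely $\|\nabla v\|_{L^\infty(T)}\lesssim h_T^{-1-n/p}\|v\|_{L^p(T)}$ (obtained by scaling to the reference element and using norm-equivalence on the finite-dimensional space $\mathcal{P}_m$), together with $|T|\approx h_T^n$ and the finite-overlap property of the patches $\{\omega_T\}$, yields $\int_\Om\|\nabla v\|^p_{L^\infty(\omega_x)}\,dx\lesssim h^{-p}\|v\|^p_{L^p(\Om)}$. Combining gives $B\lesssim h^{p(1-s)}h^{-p}\|v\|^p_{L^p(\Om)}=h^{-ps}\|v\|^p_{L^p(\Om)}$, and adding the two pieces finishes the proof.

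I expect the main obstacle to be the near-field estimate: carefully passing from the pointwise difference to a local gradient bound across element interfaces (using only continuity, since $\nabla v$ jumps), counting the crossed elements, and controlling the overlap of patches so that each element's $L^p$-mass is counted boundedly often. One caveat worth recording is that collapsing the local element sizes $h_T$ into the single global $h$ requires the mesh sizes to be comparable (as they are for the uniform mesh used in the application); on a merely shape-regular family the sharp bound naturally features the local mesh size. I would also note that the hypothesis $sp<1$ is in fact never used—only $0<s<1$ and $p\geq 1$ enter—so the inequality holds throughout $0<s<1$, the restriction $sp<1$ being inherited from the regime in which $X_h\subset W^{1+s,p}$ is of interest.
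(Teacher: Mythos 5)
Your proof is correct for the theorem as stated, but it takes a genuinely different route from the paper's. The paper decomposes the double integral by element pairs --- disjoint elements, adjacent distinct elements, and identical elements --- whereas you decompose by distance, $|x-y|<h$ versus $|x-y|\geq h$. The far-field pieces are handled the same way in both arguments. The essential difference is in the near-diagonal part: you exploit the \emph{global} Lipschitz continuity of $v$ to write $|v(x)-v(y)|\lesssim |x-y|\,\|\nabla v\|_{L^\infty(\omega_x)}$ even when $x$ and $y$ lie in different (adjacent) elements, while the paper treats adjacent-element pairs without any gradient, bounding $|v(x)-v(y)|$ by $\|v\|_{L^\infty(T)}+\|v\|_{L^\infty(T')}$ and paying for the resulting singularity with the integral $\int_T d_x^{-sp}\,dx\lesssim h^{n-sp}$, which converges precisely because $sp<1$. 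This is where your observation that $sp<1$ is never used comes from, and it is accurate --- but only because your argument leans on the continuity of $v$ across interfaces. That distinction matters downstream: Corollary \ref{Cor: FracHigherOrder} applies the theorem to $v_h'$, a \emph{discontinuous} piecewise constant, and the paper's proof (which nowhere uses interelement continuity) covers that case, whereas your near-field Lipschitz bound fails across a jump and the hypothesis $sp<1$ becomes essential there. So your proof establishes \eqref{Eq: GenFracInverse} under weaker hypotheses on $(s,p)$ but for a strictly smaller class of functions than the paper's proof actually handles. Two further minor caveats: your segment argument tacitly assumes $[x,y]\subset\Om$, which can fail near a reentrant corner of a non-convex polygonal domain (harmless in the 1-D application, but worth a chord-arc remark in general); and both your proof and the paper's use quasi-uniformity of the mesh (e.g.\ the lower bound $|x-y|\gtrsim h$ for points in disjoint elements), not merely shape regularity, a point you correctly record.
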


\begin{proof}
    We follow and adapt the idea of the proof of Theorem \ref{Thm: InverseInequality}. We start with decomposing the semi-norm (defined in \eqref{Eq: Hr}) as the following sum:
    \begin{equation}\label{Eq: GenFracInverse1}
        [v]^p_{W^{s, p}(\Om)} = I + II + III,
    \end{equation}
    where 
     \begin{eqnarray}\label{Eq: GenFracInverse2}
     	\begin{aligned}
     		I &:=  \mathop{\sum\sum}_{\substack{T, T' \in \Triag_h \\ T \cap T' = \emptyset}}\int_T\int_{T'}\frac{|v(x) - v(y)|^p}{|x - y|^{n + sp}}\,dxdy , \\
     		II &:= \mathop{\sum\sum}_{\substack{T, T' \in \Triag_h, \ T \neq T' \\ T \cap T' \neq \emptyset}}\int_T\int_{T'}\frac{|v(x) - v(y)|^p}{|x - y|^{n + sp}}\,dxdy,\\
     		III &:= \sum_{T \in \Triag_h}\int_T\int_{T}\frac{|v(x) - v(y)|^p}{|x - y|^{n + sp}}\,dxdy.
     	\end{aligned}
     \end{eqnarray}
    For $I$, we use the symmetry of the integrand and the elementary inequality $(a + b)^p \lesssim a^p + b^p$ for $a, b > 0$ to obtain
    \begin{eqnarray}\label{Eq: GenFracInverse3}
    \begin{aligned}
        I &\lesssim \mathop{\sum\sum}_{\substack{T, T' \in \Triag_h \\ T \cap T' = \emptyset}}\int_T\int_{T'}\frac{|v(x)|^p}{|x - y|^{n + sp}}\, dxdy \\ 
        &\lesssim \sum_{T \in \Triag_h}\int_T |v(x)|^p \left(\int_{|y - x| \geq \sig^{-1}h}\frac{1}{|x - y|^{n + sp}}dy\right)dx \\
        &\lesssim \ h^{-sp}\|v\|^p_{L^p(\Om)} .
        \end{aligned}
    \end{eqnarray}
    Now, to bound $II$, we let $T \in \Triag_h$ and fix $x \in T$, then define $d_x := \dist\{x, \pa T\}$. Using polar coordinates,
    \begin{eqnarray}\label{Eq: GenFracInverse4}
        \sum_{\substack{T, T' \in \Triag_h, \ T \neq T' \\ T \cap T' \neq \emptyset}}\int_{T'}\frac{dy}{|x - y|^{n + sp}}  &\leq  \int_{\R^n\setminus B(0, d_x)}\frac{dy}{|x - y|^{n + sp}} \\
        &\lesssim \int^{\infty}_{d_x}\frac{r^{n - 1}}{r^{n + sp}}\, dr \lesssim d_x^{-sp}. \nonumber
    \end{eqnarray}
    In addition, by the definition of $d_x$ and the fact that $sp < 1$, we have
    \begin{equation}\label{Eq: GenFracInverse5}
        \int_T d_x^{-sp}dx  \lesssim \ h^{n - sp}.
    \end{equation}
    Combining \eqref{Eq: GenFracInverse4} and \eqref{Eq: GenFracInverse5} alongside standard inverse inequalities gives
    \begin{equation}\label{Eq: GenFracInverse6}
        II \ \lesssim h^{n - sp}\sum_{T \in \Triag_h}\|v\|^p_{L^{\infty}(T)} \lesssim \ h^{-sp}\|v\|^p_{L^p(\Om)}.
    \end{equation}
    
    Finally,  $III$ can also be handled using standard inverse inequalities as follows:
    \begin{eqnarray}\label{Eq: GenFracInverse7}
        III &\leq& \sum_{T \in \Triag_h}[v]^p_{W^{1, \infty}(T)}\int_T\int_T\frac{|x - y|^p}{|x - y|^{n + sp}}\, dxdy \\
        &\lesssim& h^{n + p - sp}\sum_{T \in \Triag_h}[v]^p_{W^{1, \infty}(T)} \lesssim \ h^{-sp}\|v\|^p_{L^p(\Om)}. \nonumber
    \end{eqnarray}
    Combining \eqref{Eq: GenFracInverse3}, \eqref{Eq: GenFracInverse6}, and \eqref{Eq: GenFracInverse7}, and taking $p$th roots gives the desired result.
\end{proof}

The following corollary follows from applying Theorem \ref{Thm: GenFracInverse} in the 1-D case, to the weak derivative of functions in $W^{1 + s, p}(0, 1)$ for some $0 
< s < 1$ and $1 \leq p < \infty$.

\begin{corollary}\label{Cor: FracHigherOrder}
    Suppose that $0<s < 1$ and $1 \leq p < \infty$ satisfy $sp < 1$. Let $v_h \in X_h$  as defined in \eqref{CtsPWLin}. Then there holds the following inverse inequality: 
    \begin{equation}\label{Eq: InverseFracGen}
        [v_h]_{W^{1 + s, p}(0, 1)} \ \lesssim \ h^{-s}\|v_h\|_{W^{1, p}(0, 1)}.
    \end{equation}
    In particular, the inequality implies that $v_h \in W^{1 + s, p}(0, 1)$.
\end{corollary}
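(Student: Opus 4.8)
The plan is to reduce the statement to the already-established Theorem \ref{Thm: GenFracInverse}, applied to the weak derivative of $v_h$ rather than to $v_h$ itself. First I would unwind the definition of the fractional seminorm. Since $0<s<1$, the integer part of $r=1+s$ is $k=1$, so in the one-dimensional case $n=1$ the double-integral term in \eqref{Eq: Hr} is taken over the first derivative, giving the exact identity
\begin{equation*}
  [v_h]^p_{W^{1+s,p}(0,1)} \ = \ \int_0^1\int_0^1 \frac{|v_h'(x)-v_h'(y)|^p}{|x-y|^{1+sp}}\,dx\,dy \ = \ [v_h']^p_{W^{s,p}(0,1)}.
\end{equation*}
Thus it suffices to establish the $W^{s,p}$ inverse estimate $[v_h']_{W^{s,p}(0,1)}\lesssim h^{-s}\|v_h'\|_{L^p(0,1)}$ for the single function $w:=v_h'$.

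Second, I would identify $w=v_h'$ as a piecewise polynomial subordinate to the mesh $\{x_k\}$: because $v_h$ is continuous and affine on each $I_k$, its derivative $w$ is constant on each $I_k$, hence a piecewise polynomial of degree at most $m$ for every $m\ge 1$. Invoking Theorem \ref{Thm: GenFracInverse} with $\Om=(0,1)$, $n=1$, and the standing hypotheses $0<s<1$, $sp<1$ then yields $[w]_{W^{s,p}(0,1)}\lesssim h^{-s}\|w\|_{L^p(0,1)}$. Combining this with the identity from the first step and the trivial bound $\|v_h'\|_{L^p(0,1)}\le\|v_h\|_{W^{1,p}(0,1)}$ produces \eqref{Eq: InverseFracGen}; the membership $v_h\in W^{1+s,p}(0,1)$ is then immediate since $\|v_h\|_{W^{1,p}(0,1)}$ is finite.

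The one point demanding care — and the main obstacle — is that Theorem \ref{Thm: GenFracInverse} is phrased for \emph{continuous} piecewise polynomials of degree $m\ge 1$, whereas $w=v_h'$ is piecewise constant and generically discontinuous across the mesh nodes, so it does not literally lie in the continuous space $\mathcal{P}_m(\Om)$. I would resolve this by observing that the proof of Theorem \ref{Thm: GenFracInverse} is carried out entirely element-by-element and never uses global continuity: the far-field term $I$ and the neighboring-element term $II$ rely only on the element-wise $L^p$–$L^\infty$ inverse estimate, while the diagonal term $III$ involves only the within-element gradient seminorm $[w]_{W^{1,\infty}(T)}$, which vanishes identically when $w$ is constant on each $T$. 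Consequently the estimate \eqref{Eq: GenFracInverse} holds verbatim for the discontinuous piecewise-constant $v_h'$, and a brief remark to this effect (or a one-line extension of Theorem \ref{Thm: GenFracInverse} allowing discontinuous piecewise polynomials) suffices to close the argument.
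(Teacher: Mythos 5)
Your proof is correct and follows essentially the same route as the paper, which likewise obtains the corollary by applying Theorem \ref{Thm: GenFracInverse} with $n=1$ to the weak derivative $v_h'$ and using the identity $[v_h]_{W^{1+s,p}(0,1)}=[v_h']_{W^{s,p}(0,1)}$. Your observation that $v_h'$ is a discontinuous piecewise constant while the theorem is phrased for continuous piecewise polynomials --- and that the element-by-element proof (terms $I$, $II$, $III$) never uses global continuity, with the diagonal term $III$ vanishing for piecewise constants --- is a worthwhile clarification that the paper leaves implicit.
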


We conclude this section by noting that when $p=2$, the condition $sp<1$ implies that $s< 1/2$. Hence, \eqref{Eq: InverseFracGen} recovers \eqref{Eq: XuInverse}.  Moreover, 	\eqref{Eq: InverseFracGen} also provides a blowup rate $O(h^{-s})$  for the finite element functions in the $W^{1+s,p}$-norm. This estimate will play a critical role in our convergence analysis, see Lemma \ref{Lem: ErrorEnergyIV}.


\section{$\Ga$-convergence for Mani\`a's problem}\label{Sec: GaConv}

One of the main goals of this paper is to complete the proof of $\Ga$-convergence of the approximate functional 
$J_h^\alpha$,  defined in \eqref{cutoffMania}, to the Mani\`a's functional $J$ defined in \eqref{Eq: Mania}. Consequently, we will be able to conclude the convergence of the numerical minimizers. We recall that in \cite{feng2016enhanced, schnake2017numerical}, the authors were able to prove a {\em lim-inf} inequality for the approximate functional, but did not construct a recovery sequence converging in a suitable topology; this is the gap we intend to fill in here. We stress that our proof technique heavily relies on the structure of \eqref{Eq: Mania}, but not on the exact knowledge of the true minimizer itself. Throughout this section, let $K_h$ be an operator which satisfies properties \eqref{property-1}--\eqref{property-3} ( see Remark \ref{rem3.1}).

First, we recall the definition of $\Ga$-convergence tailored to Mani\`a's problem. 

\smallskip
\begin{definition}[$\Ga$-convergence]\label{GaConvDef}
Let $\alpha > 0$,  $0<s<1/3$ and $1 < p <3/2$ be such that $\left(2/3 + s\right)p < 1$, $X_h$ denote the piecewise linear finite element space, and  $X:=\mathcal{A} \cap W^{1 + s,p}(0,1)$. 
We say that the family of functionals $\{J^{\al}_h\}_{h>0}$, defined on $X_h$,  {\em $\Ga$-converges} as $h\to 0^+$ to the functional $J$, defined on $X$,  (written as $J^{\al}_h \xrightarrow{\Ga} J$) with respect to the strong  $W^{1, p}(0, 1)$-topology if the following hold:
    \begin{enumerate} \label{GC1}   
        \item[\rm (i)] \underline{The lim-inf property:} If $\{v_h\}_{h > 0} \subset X_h$ 
        is such that $v_h \rightarrow v$ strongly in $W^{1, p}(0, 1)$ then we have the {\em lim-inf inequality}
        \begin{equation}\label{liminfIneqCutoff}
           \underset{h\to 0^+}{ \mbox{\rm lim\,inf}}\,  J^{\al}_h(v_h)  \geq J(v).
        \end{equation}\label{GC2}
        \item[\rm (ii)] \underline{Recovery sequence property:} For any $v \in X$, 
        there exists a {\em recovery sequence} $\{v_h\}_{h > 0}\subset X_h$ such that $v_h \rightarrow v$ strongly in $W^{1, p}(0, 1)$ and the following {\em lim-sup inequality} holds:
        \begin{equation}\label{limsupneqCutoff}
         \underset{h\to 0^+}{ \mbox{\rm lim\,sup}}\, J^{\al}_h(v_h)   \leq J(v).
        \end{equation}
    \end{enumerate}
\end{definition}

Recall that a property stronger than (i) was proven in \cite{feng2016enhanced, schnake2017numerical}, namely assuming convergence only in the weak $W^{1, 1}(0, 1)$-topology. Also notice that $X_h \subset W^{1 + s, p}(0, 1)$ provided that 
$0\leq sp<1$ (see Corollary \ref{Cor: FracHigherOrder}). Our goal here is to establish property (ii). The next technical lemma is another step towards achieving our goal. 


\begin{lemma}\label{Lem: ErrorEnergyIII}
    Let $1 < p < \infty$, $0 < s < 1$ and suppose $v \in X:= \Alg \cap W^{1 + s, p}(0, 1)$. Let $0 < \al < (1 + s)/6$ be fixed, and  $v_h:=K_hv$ 
   Then there holds
    \begin{eqnarray}\label{Eq: ErrorEnergyIII}
        &\int^{1}_{0} \bigl(\chi^{\al}_h(v_h'(x)\bigr)^6 \bigl[(v_h(x)^3 - x)^2 - (v(x)^3 - x)^2 \bigr]\,dx \\ &\hskip 1in  \lesssim h^{1 + s - 6\al}\|v\|^5_{L^{\infty}(0, 1)}[v]_{W^{1 + s, p}(0, 1)}. \nonumber
    \end{eqnarray}
\end{lemma}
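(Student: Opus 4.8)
The plan is to bound the integrand in \eqref{Eq: ErrorEnergyIII} pointwise by a product of three quantities that are each easy to control, and then integrate. First I would exploit the defining bound on the cut-off: since $|\chi^{\al}_h(t)| = \min\{|t|, h^{-\al}\} \le h^{-\al}$ for every $t$, the first factor satisfies $\bigl(\chi^{\al}_h((K_hv)'(x))\bigr)^6 \le h^{-6\al}$ uniformly in $x$. This disposes of the cut-off factor entirely and is the only place it enters. Note that the constraint $\al < (1+s)/6$ is not actually needed for the inequality itself; it is only used later to guarantee that the resulting exponent $1+s-6\al$ is positive.

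Next I would factor the bracketed difference of squares. Writing $a := K_hv(x)$ and $b := v(x)$, the elementary identity gives
\[
(a^3 - x)^2 - (b^3 - x)^2 = (a^3 - b^3)(a^3 + b^3 - 2x) = (a - b)(a^2 + ab + b^2)(a^3 + b^3 - 2x).
\]
The first factor $a - b = K_hv(x) - v(x)$ is precisely the interpolation error that \eqref{Eq: Approxv} controls, so it remains to bound the other two factors in $L^{\infty}$. Here I would use that every $v \in X = \Alg \cap W^{1+s,p}(0,1)$ is continuous and belongs to $L^{\infty}(0,1)$, together with the fact that the piecewise-linear nodal interpolant satisfies $\|K_hv\|_{L^{\infty}(0,1)} \le \|v\|_{L^{\infty}(0,1)}$ (its values are convex combinations of nodal values of $v$). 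Hence $|a|, |b| \le \|v\|_{L^{\infty}(0,1)}$, so $|a^2 + ab + b^2| \le 3\|v\|^2_{L^{\infty}(0,1)}$ and, since $x \in [0,1]$, $|a^3 + b^3 - 2x| \le 2\|v\|^3_{L^{\infty}(0,1)} + 2$.

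To collapse the last bound into a clean power of $\|v\|_{L^{\infty}(0,1)}$ I would invoke the boundary condition built into $\Alg$: since $v(1) = 1$ we have $\|v\|_{L^{\infty}(0,1)} \ge 1$, whence $2\|v\|^3_{L^{\infty}(0,1)} + 2 \le 4\|v\|^3_{L^{\infty}(0,1)} \lesssim \|v\|^3_{L^{\infty}(0,1)}$. Combining the three pointwise bounds, the integrand is dominated by $h^{-6\al}\|v\|^5_{L^{\infty}(0,1)}|K_hv(x) - v(x)|$. Integrating over $(0,1)$ and using $|(0,1)| = 1$ (so that $\|\cdot\|_{L^1(0,1)} \le \|\cdot\|_{L^p(0,1)}$ by Hölder) gives a bound by $h^{-6\al}\|v\|^5_{L^{\infty}(0,1)}\|K_hv - v\|_{L^p(0,1)}$. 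Finally I would apply \eqref{Eq: Approxv} from Assumption \ref{Assump: NeedApprox}(i) with $r = 1+s$, namely $\|K_hv - v\|_{L^p(0,1)} \lesssim h^{1+s}[v]_{W^{1+s,p}(0,1)}$, and multiply through to obtain exactly $h^{1+s-6\al}\|v\|^5_{L^{\infty}(0,1)}[v]_{W^{1+s,p}(0,1)}$, as desired.

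The computation is essentially routine; the only steps requiring genuine care are the algebraic factorization and the bookkeeping of the powers of $\|v\|_{L^{\infty}(0,1)}$, where the observation $\|v\|_{L^{\infty}(0,1)} \ge 1$ is exactly what consolidates $\|v\|^2 \cdot \|v\|^3$ into the stated fifth power instead of leaving a spurious additive constant. The main (mild) obstacle is justifying $\|K_hv\|_{L^{\infty}(0,1)} \le \|v\|_{L^{\infty}(0,1)}$: for the nodal interpolant this is immediate from its convex-combination structure, but if one wishes to argue using only the abstract properties in Assumption \ref{Assump: NeedApprox}, one must instead produce an $L^{\infty}$ bound on $K_hv$ (for instance via an inverse inequality combined with the $L^p$ error estimate), which I would sidestep by simply taking $K_h = I_h$.
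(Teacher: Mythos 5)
Your proof is correct and follows essentially the same route as the paper's: bound the cut-off factor by $h^{-6\al}$, factor the difference of squares, absorb the remaining factors into $\|v\|^5_{L^\infty(0,1)}$ using the boundedness of $v$ and $K_hv$, and conclude with the $L^p$ interpolation estimate \eqref{Eq: Approxv}. The extra details you supply (the explicit difference-of-cubes factorization, the observation $\|v\|_{L^\infty(0,1)}\ge 1$ to absorb the $-2x$ term, and the $L^\infty$-stability of the nodal interpolant) are points the paper glosses over with $\lesssim$, so they are welcome but not a different argument.
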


\begin{proof}
    By the definition of $\chi_h^\alpha$, we have
    \begin{eqnarray*} 
     && \int^{1}_{0} \bigl(\chi^{\al}_h(v_h'(x)\bigr)^6 \bigl[(v_h(x)^3 - x)^2 - (v(x)^3 - x)^2 \bigr]\,dx   \\
      &&\hskip 0.9in  \leq  h^{-6\al}\int^{1}_{0} \bigl|(v_h(x)^3 - x)^2 - (v(x)^3 - x)^2\bigr|\, dx \nonumber \\
       &&\hskip 0.9in  \lesssim h^{-6\al}\int^{1}_{0} \bigl|v(x)^3 + v_h(x)^3 - 2x\bigr|\cdot \bigl|K_hv(x)^3 -  v(x)^3 \bigr|\, dx. \nonumber
    \end{eqnarray*}
    
    Since $v \in W^{1, p}(0, 1)$ for $p > 1$, then $v \in L^{\infty}(0, 1)$, and in turn $v_h \in L^{\infty}(0, 1)$ due to \eqref{property-1}, so
    \begin{eqnarray*} 
       && \int^{1}_{0} \bigl(\chi^{\al}_h(v_h'(x)\bigr)^6 \bigl[(v(x)^3 - x)^2 - (v_h(x)^3 - x)^2 \bigr]\,dx \\
       &&\hskip 1.2in \lesssim h^{-6\al}\|v\|^3_{L^{\infty}(0, 1)} \int^{1}_{0} \bigl|v(x)^3 - v_h(x)^3 \bigr|\, dx. \nonumber
    \end{eqnarray*}
    By using a binomial formula and (again) that $v, v_h \in L^{\infty}(0, 1)$, we obtain
    \begin{eqnarray}\label{Eq: ErrorEnergyIIIEq4}
       && \int^{1}_{0} \bigl(\chi^{\al}_h(v_h'(x)\bigr)^6 \bigl[(v(x)^3 - x)^2 - (v_h(x)^3 - x)^2 \bigr]\,dx  \\
      && \hskip 1.8in \lesssim  h^{-6\al}\|v\|^5_{L^{\infty}(0, 1)} \|v-v_h\|_{L^1(0, 1)}. \nonumber
    \end{eqnarray}
   Then, \eqref{Eq: ErrorEnergyIII} follows from \eqref{property-2} and \eqref{Eq: ErrorEnergyIIIEq4}; the proof is complete.
\end{proof}

We now state and prove a second technical lemma.

\begin{lemma}\label{Lem: ErrorEnergyIV}
Let $1 < p < \infty$, $0 < s < 1$, and suppose that $v \in X:=\Alg \cap W^{1 + s, p}(0, 1)$. Let $0 < \al < s/5$ be fixed, and  $v_h:=K_hv$.
  Then there holds
    \begin{eqnarray}\label{Eq: ErrorEnergyIII2}
       & \int^1_0|(\chi^{\al}_h(v_h'(x))^6 - (\chi^{\al}_h(v'(x)))^6|(v(x)^3 - x)^2 dx \\
        &\hskip 1.5in  \lesssim \ h^{s - 5\al}  
      \|v\|^6_{L^{\infty}(0, 1)}[v]_{W^{1 + s, p}(0, 1)} . \nonumber
    \end{eqnarray}    
\end{lemma}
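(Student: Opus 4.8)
The plan is to estimate the integrand pointwise by decoupling the difference of sixth powers of the truncated derivatives from the weight $(v^3 - x)^2$, exploiting two elementary properties of the cut-off $\chi^\al_h$ defined in \eqref{cutoff}: it is bounded by $h^{-\al}$, and it is nonexpansive (1-Lipschitz, being the clamp of the identity to $[-h^{-\al}, h^{-\al}]$).

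First I would handle the difference of sixth powers. Writing $a := \chi^\al_h((K_h v)'(x))$ and $b := \chi^\al_h(v'(x))$, the factorization
\[
a^6 - b^6 = (a - b)\bigl(a^5 + a^4b + a^3b^2 + a^2b^3 + ab^4 + b^5\bigr),
\]
together with $|a|, |b| \le h^{-\al}$, yields the pointwise bound $|a^6 - b^6| \le 6h^{-5\al}|a - b|$; and the nonexpansiveness of $\chi^\al_h$ gives $|a - b| \le |(K_h v)'(x) - v'(x)|$. This is the heart of the argument: five of the six summands each contribute one factor of $h^{-\al}$, producing the explicit blow-up $h^{-5\al}$, while the remaining factor, processed via the Lipschitz property, converts the cut-off difference into the raw derivative error $|(K_h v)' - v'|$. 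Since $v \in W^{1,p}(0,1) \subset L^\infty(0,1)$ with $v(1)=1$ (so $\|v\|_{L^\infty} \ge 1$), the weight satisfies $(v(x)^3 - x)^2 \lesssim \|v\|^6_{L^\infty(0,1)}$ uniformly in $x$. Combining these, the left-hand side of \eqref{Eq: ErrorEnergyIII2} is bounded by
\[
6h^{-5\al}\|v\|^6_{L^\infty(0,1)} \int^1_0 |(K_h v)'(x) - v'(x)|\,dx = 6h^{-5\al}\|v\|^6_{L^\infty(0,1)}\|(K_h v)' - v'\|_{L^1(0,1)}.
\]

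Finally, because $|(0,1)| = 1$, Hölder's inequality gives $\|(K_h v)' - v'\|_{L^1(0,1)} \le \|(K_h v)' - v'\|_{L^p(0,1)} \le \|K_h v - v\|_{W^{1,p}(0,1)}$, and applying the approximation estimate \eqref{Eq: Approxv2} with $r = 1+s$ supplies the factor $h^s[v]_{W^{1+s,p}(0,1)}$, which assembles into the claimed bound $h^{s-5\al}\|v\|^6_{L^\infty(0,1)}[v]_{W^{1+s,p}(0,1)}$. I do not expect a serious obstacle; the only steps requiring care are extracting the precise exponent $5\al$ from the five bounded factors, and the $L^1$-to-$L^p$ passage that makes the fractional seminorm $[v]_{W^{1+s,p}}$ (rather than $[v]_{W^{1+s,1}}$) appear on the right. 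The hypothesis $\al < s/5$ is not needed for the inequality itself, but it guarantees $s - 5\al > 0$, so that the bound decays as $h \to 0^+$, which is precisely what the subsequent recovery-sequence construction (together with Lemma \ref{Lem: ErrorEnergyIII}) requires.
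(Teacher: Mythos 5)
Your proposal is correct and follows essentially the same route as the paper's proof: factor the difference of sixth powers, bound five of the factors by $h^{-\al}$ each via the definition of $\chi^{\al}_h$, use the $1$-Lipschitz property of the cut-off to pass to $|(K_h v)' - v'|$, bound the weight by $\|v\|^6_{L^{\infty}(0,1)}$, and conclude with \eqref{Eq: Approxv2}. Your remarks on the $L^1$-to-$L^p$ passage and on the role of the hypothesis $\al < s/5$ are accurate refinements of what the paper leaves implicit.
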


\begin{proof}
    By factoring the difference of sixth powers, the definition \eqref{cutoff}, the $L^{\infty}$-stability of $K_h$ granted by \eqref{property-1}, and the continuous embedding $W^{1 + s, p}(0, 1) \subset L^{\infty}(0, 1)$, we have
    \begin{eqnarray}\label{Eq: ErrorEnergyIII1}
    \begin{aligned}
        & \int^1_0|(\chi^{\al}_h(v_h'(x))^6 - (\chi^{\al}_h(v'(x)))^6|(v(x)^3 - x)^2 dx \\ 
        &\lesssim \int^1_0|\chi^{\al}_h(v_h'(x)) - \chi^{\al}_h(v'(x))| \cdot (|\chi^{\al}_h(v_h'(x))|^5 + |\chi^{\al}_h(v'(x))|^5)(v(x)^3 - x)^2dx \nonumber\\ 
        &\leq \ h^{-5\al}\|v\|^6_{L^{\infty}(0, 1)}\int^1_0|\chi^{\al}_h(v_h'(x)) - \chi^{\al}_h(v'(x))|dx.
         \end{aligned}
    \end{eqnarray}
    Now, due to the inequality $|\chi^{\al}_h(v_h'(x)) - \chi^{\al}_h(v'(x))| \leq |v_h'(x) - v'(x)|$ and \eqref{property-3}, we obtain the desired result.
\end{proof}

Notice this lemma would not stand if we only assumed that $v \in \Alg \cap W^{1, p}(0, 1)$.

Finally, we are ready to state and prove the main result of this paper for Mani\`a's problem. Since the minimizer for the continuous problem is known to belong to $W^{1 + s, p}(0, 1)$ 
for $0<s<1/3$ and $1 < p <3/2$ such that $\left(2/3 + s\right)p < 1$, the following result is stated specifically in that case, 

\begin{theorem}[$\Ga$-convergence of $J_h^\alpha$]\label{Th: GaConvCutoffs}
    Let $0<s<1/3$ and $1 < p <3/2$ be such that $\left(2/3 + s\right)p < 1$. Assume that $\al < \min\{(1 + s)/6, s/5\}$ is fixed, and we have that $J^{\al}_h \xrightarrow{\Ga} J$ as $h\to 0^+$ in the strong $W^{1, p}(0, 1)$-topology. 
\end{theorem}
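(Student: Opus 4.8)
The plan is to work inside the $\Ga$-convergence framework of Definition \ref{GaConvDef}. Property (i), the lim-inf inequality, has already been established in \cite{schnake2017numerical, feng2016enhanced} (indeed under the weaker hypothesis of weak $W^{1,1}(0,1)$-convergence), so the entire task reduces to verifying property (ii), the recovery sequence. Fix $v \in X = \Alg \cap W^{1+s,p}(0,1)$. Every such $v$ is continuous, so the nodal interpolant $K_hv = I_hv$ is well-defined, and since $v(0)=0$, $v(1)=1$ it preserves the boundary conditions and hence lies in $X_h$; I would take it as the recovery sequence. Strong convergence $K_hv \to v$ in $W^{1,p}(0,1)$ is immediate from \eqref{Eq: Approxv2}, namely $\|v - K_hv\|_{W^{1,p}(0,1)} \lesssim h^s[v]_{W^{1+s,p}(0,1)} \to 0$ as $h \to 0^+$. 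The generalized inverse inequality of Corollary \ref{Cor: FracHigherOrder} guarantees $K_hv \in W^{1+s,p}(0,1)$, so the construction stays inside $X$.

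The heart of the matter is the lim-sup inequality \eqref{limsupneqCutoff}. I would write $J^{\al}_h(K_hv) - J(v)$ as a telescoping sum of three pieces obtained by inserting intermediate terms: the first isolates the change in the factor $(\cdot^3 - x)^2$ while keeping $\chi^{\al}_h((K_hv)')$; the second isolates the replacement of $\chi^{\al}_h((K_hv)')$ by $\chi^{\al}_h(v')$ while keeping the exact factor $(v^3 - x)^2$; and the third compares the truncated derivative against the genuine one. By construction the first piece is exactly the quantity bounded in Lemma \ref{Lem: ErrorEnergyIII}, hence $\lesssim h^{1+s-6\al}\|v\|^5_{L^{\infty}(0,1)}[v]_{W^{1+s,p}(0,1)}$, and the second is exactly the quantity bounded in Lemma \ref{Lem: ErrorEnergyIV}, hence $\lesssim h^{s-5\al}\|v\|^6_{L^{\infty}(0,1)}[v]_{W^{1+s,p}(0,1)}$, while the third is
\[
\int^1_0 \bigl[(\chi^{\al}_h(v'(x)))^6 - v'(x)^6\bigr](v(x)^3 - x)^2\,dx.
\]

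The decisive observation is that this third piece is nonpositive: by the definition \eqref{cutoff} one has $|\chi^{\al}_h(t)| = \min\{|t|, h^{-\al}\} \leq |t|$ pointwise, so $(\chi^{\al}_h(v'))^6 \leq (v')^6$, whereas $(v^3 - x)^2 \geq 0$. Discarding this harmless negative term leaves only the two interpolation-error pieces, and the hypothesis $\al < \min\{(1+s)/6,\, s/5\}$ makes both exponents $1+s-6\al$ and $s-5\al$ strictly positive. Therefore Lemmas \ref{Lem: ErrorEnergyIII} and \ref{Lem: ErrorEnergyIV} force both to vanish as $h \to 0^+$, giving $\limsup_{h\to 0^+}\bigl(J^{\al}_h(K_hv) - J(v)\bigr) \leq 0$, which is precisely \eqref{limsupneqCutoff}. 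Combined with the known lim-inf inequality, this yields $J^{\al}_h \xrightarrow{\Ga} J$ in the strong $W^{1,p}(0,1)$-topology.

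I expect the genuine difficulty to reside not in this assembly but in the two supporting lemmas and, more conceptually, in the recasting of the admissible class as $X = \Alg \cap W^{1+s,p}$. The crude bound $(\chi^{\al}_h(w'))^6 \leq h^{-6\al}$ needed to estimate the interpolation error injects a blow-up factor $h^{-6\al}$ (resp.\ $h^{-5\al}$), which can only be defeated by interpolation estimates decaying like $h^{1+s}$ (resp.\ $h^s$) — rates unavailable for a merely $W^{1,1}$ or $W^{1,p}$ function and supplied only by the extra fractional derivative carried by $v$. This is exactly why, as noted after Lemma \ref{Lem: ErrorEnergyIV}, the argument collapses under the weaker assumption $v \in \Alg \cap W^{1,p}(0,1)$; balancing the cutoff decay $h^{-\al}$ against the fractional interpolation gain $h^{s}$ through the choice of $\al$ is the true crux.
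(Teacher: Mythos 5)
Your proposal is correct and follows essentially the same route as the paper: the same recovery sequence $K_hv$, the same three-term splitting handled by Lemmas \ref{Lem: ErrorEnergyIII} and \ref{Lem: ErrorEnergyIV}, and the same observation that $|\chi^{\al}_h(t)|\leq |t|$ makes the remaining term at most $J(v)$. Your closing commentary on why the fractional regularity and the constraint on $\al$ are the true crux matches the paper's own discussion.
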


\begin{proof}
   Since strong convergence in $W^{1, p}(0, 1)$ implies weak convergence in $W^{1, 1}(0, 1)$, then \cite[Theorem 4.1]{schnake2017numerical} implies that the inequality \eqref{liminfIneqCutoff} holds. In our specific setting, we may expedite the proof. Let $\{v_h\}_{h > 0}$ be a sequence such that $v_h \in X_h$ for all $h > 0$ and $v_h \rightarrow v$ strongly in $W^{1, p}(0, 1)$. Then $\chi^{\al}_h(v_h'(x))^6(v_h(x)^3 - x)^2 \rightarrow v'(x)^6(v(x)^3 - x)^2$ for a.e. $x \in (0, 1)$. This, combined with the non-negativity of the integrands for $\{J^{\al}_h\}_{h > 0}$, allows us to conclude \eqref{liminfIneqCutoff} by Fatou's Lemma.

    Next, to prove inequality \eqref{limsupneqCutoff},  we fix $v \in X$ and let $\{v_h\}_{h > 0} = \{K_h v\}_{h > 0}$.  We now show that this choice is a valid recovery sequence. It immediately follows from \eqref{property-3} that $v_h \rightarrow v$ strongly in $W^{1, p}(0, 1)$. For any $h > 0$ fixed, we have 
\begin{eqnarray}\label{Th: GaConvCutoffsEq1}
        J^{\al}_h(v_h) &= & \int^{1}_{0}(\chi^{\al}_h \bigl(v_h'(x) \bigr)^6   \bigl[(v_h(x)^3 - x)^2 - (v(x)^3 - x)^2\bigr]\, dx  \\ 
        && \quad +\int^{1}_{0}(\bigl(\chi^{\al}_h(v_h'(x) \bigr)^6 - \bigl(\chi^{\al}_h(v'(x) \bigr)^6) (v(x)^3 - x)^2\, dx  \nonumber\\
        &&\quad  + \int^{1}_{0}\bigl(\chi^{\al}_h(v'(x) \bigr)^6 (v(x)^3 - x)^2\, dx.
         \nonumber
    \end{eqnarray}
    The first term on the right-hand side of \eqref{Th: GaConvCutoffsEq1} converges to $0$ thanks to Lemma \ref{Lem: ErrorEnergyIII}. 
    As for the second term, we observe that 
    \begin{eqnarray}\label{Th: GaConvCutoffsEq2}
      && \int^{1}_{0}(\bigl(\chi^{\al}_h(v_h'(x) \bigr)^6 - \bigl(\chi^{\al}_h(v'(x) \bigr)^6) (v(x)^3 - x)^2\, dx  \\  
       &&\hskip 1.0in \leq \int^1_0|(\chi^{\al}_h(v_h'(x))^6 - (\chi^{\al}_h(v'(x)))^6|(v(x)^3 - x)^2 dx,  \nonumber
    \end{eqnarray}
    and the right-hand side converges to $0$ by Lemma \ref{Lem: ErrorEnergyIV}. Finally, it is evident that
    \begin{equation}\label{Th: GaConvCutoffsEq3}
        \int^{1}_{0}\bigl(\chi^{\al}_h(v'(x) \bigr)^6 (v(x)^3 - x)^2dx \ \leq \ J(v)
    \end{equation}
     for any $h > 0$. Combine \eqref{Th: GaConvCutoffsEq1}, \eqref{Th: GaConvCutoffsEq2}, and \eqref{Th: GaConvCutoffsEq3}, and send $h \rightarrow 0^+$ to conclude.
\end{proof}

We remark that this result is slightly stronger than the one anticipated in \cite{feng2016enhanced} because the convergence of the recovery sequence takes place not only in the weak $W^{1, 1}(0, 1)$ topology but also in the strong $W^{1, p}(0, 1)$ topology. 

We conclude this section by proving the convergence of 
the functionals $\{J^{\al}_h(\cdot)\}_{h > 0}$ at the corresponding minimum values. If the functionals $\{J^{\al}_h(\cdot)\}_{h > 0}$ were equi-coercive in $W^{1 + s, p}(0, 1)$, then we could immediately conclude that $\overline{u}_h\rightarrow \overline{u}$ strongly in $W^{1, p}(0, 1)$ (see \cite[Corollary 7.20]{dal2012introduction} or \cite[Theorem 13.3]{Rin}). However, this is not the case; instead, we use $J(\overline{u}) = 0$ and show the convergence of the functionals at the minimum values.

\begin{corollary}[Convergence of minimum values]\label{Cor: convOfMins}
   Let $0< s<1/3$, $1 < p <3/2$ be such that $\left(2/3 + s\right)p < 1$, and $\overline{u}_h \in X_h$ be the solution to problem \eqref{enhanced} for each fixed $h > 0$, and let $\overline{u}$ be the solution to problem \eqref{Eq: Mania}. Then it holds as $h\to 0^+$,  
    \begin{equation}\label{ConvMins}
     \lim_{h \rightarrow 0^+}  J^{\al}_h(\overline{u}_h) \ = \ J(\overline{u}).
    \end{equation}
\end{corollary}

\begin{proof}
By using $\{K_h\overline{u}\}_{h > 0}$ as the recovery sequence associated with $\overline{u}$, along with the minimality of $\{\overline{u}_h\}_{h > 0}$ and the non-negativity of  $\{J^{\al}_h(\cdot)\}_{h > 0}$, we have the inequality chain
    \begin{equation}\label{ConvOfMinsEq1}
        J(\overline{u}) \ \geq \ \underset{h\to 0^+}{ \mbox{\rm lim\,sup\ }}J^{\al}_h(K_h\overline{u}) \ \geq \ \underset{h\to 0^+}{ \mbox{\rm lim\,inf\ }}J^{\al}_h(\overline{u}_h) \ \geq \ 0 \ = \ J(\overline{u}),
    \end{equation}
    which completes the proof.
    Notice that the last equality in \eqref{ConvOfMinsEq1} is only true since $(s+2/3)p < 1$. 
\end{proof}


\section{$\Ga$-convergence for Foss's problem}\label{Sec: GaConvFoss}


The objective of this section, which is the second main goal of this paper, is to prove the $\Gamma$-convergence of the enhanced finite element discrete energy functional $F^\beta_h$ defined in \eqref{Eq: FossCutoff}.  To the end, we first state the two-dimensional analogue of Definition \ref{GaConvDef}.

\begin{definition}[$\Ga$-convergence]\label{Def: GaConvDefFoss}
Let $\alpha > 0$,  $0<s<1/3$ and $1 \leq p <3/2$ be such that $\left(2/3 + s\right)p < 1$, $Y_h$ denote the continuous piecewise linear finite element space \eqref{Eq: CtsPWLin2D}, and  $Y:=\mathcal{B} \cap W^{1 + s,p}(\Om)$. 
We say that the family of functionals $\{F^{\al}_h\}_{h>0}$, defined on $Y_h$,  {\em $\Ga$-converges} as $h\to 0^+$ to the functional $F$, defined on $Y$,  (written as $F^{\be}_h \xrightarrow{\Ga} F$) with respect to the strong  $W^{1, p}(\Om)$-topology if the following hold:
    \begin{enumerate} \label{GC1F}   
        \item[\rm (i)] \underline{The lim-inf property:} If $\{v_h\}_{h > 0} \subset Y_h$ 
        is such that $v_h \rightarrow v$ strongly in $W^{1, p}(\Om)$ then we have the {\em lim-inf inequality}
        \begin{equation}\label{Eq: liminfIneqCutoffFoss}
           \underset{h\to 0^+}{ \mbox{\rm lim\,inf}}\,  F^{\be}_h(v_h)  \geq F(v).
        \end{equation}\label{GC2F}
        \item[\rm (ii)] \underline{Recovery sequence property:} For any $v \in Y$, 
        there exists a {\em recovery sequence} $\{v_h\}_{h > 0}\subset Y_h$ such that $v_h \rightarrow v$ strongly in $W^{1, p}(\Om)$ and the following {\em lim-sup inequality} holds:
        \begin{equation}\label{Eq: limsupneqCutoffFoss}
         \underset{h\to 0^+}{ \mbox{\rm lim\,sup}}\, F^{\be}_h(v_h)   \leq F(v).
        \end{equation}
    \end{enumerate}
\end{definition}

The next two lemmas comprise an analog of Lemma \ref{Lem: ErrorEnergyIII}. Throughout this section, let $K_h$ be an operator that satisfies properties \eqref{property-1}--\eqref{property-3}; see Remark \ref{rem3.1}.

\begin{lemma}\label{Lem: tecRecSeqFossLem1}
    Let $1 \leq p < \infty$, $0 < s < 1$, and suppose $v \in \Bor \cap W^{1 + s, p}(\Om)$. Let $0 < \be < (1 + s)/14$ be fixed. Let $v_h:=K_hv$. Then there holds
    \begin{eqnarray}\label{Eq: tecRecSeqFossLem1}
        &&\int_{\Om}\Bigl(\frac{y}{y - 1}\Bigr)^{14}(|v_h|^{\frac{14 - 3y}{y - 1}} - |v|^{\frac{14 - 3y}{y - 1}})(|v_h|^{\frac{y}{y - 1}} - x)^2(\chi^{\be}_h(v_h)_x)^{14}\, dxdy \\
        &&\quad \lesssim h^{1 + s - 14\be}\max\bigl\{1, \|v\|^{22}_{L^{\infty}(\Om)} \bigr\}. \nonumber
    \end{eqnarray}
\end{lemma}

\begin{proof}
     This estimate follows readily from the definition of $\chi_h^\beta$, the $L^{\infty}$-stability of $K_h$ (see \eqref{property-1}), $\frac{3}{2} \leq y \leq \frac{5}{2}$, and the fact that $v \in L^{\infty}(\Om)$.
\end{proof}

\begin{lemma}\label{Lem: tecRecSeqFossLem2}
     Let $1 \leq p < \infty$, $0 < s < 1$, and suppose $v \in \Bor \cap W^{1 + s, p}(\Om)$. Let $0 < \be < (1 + s)/14$ be fixed, and let $v_h:=K_hv$. Then there holds
    \begin{eqnarray}\label{Eq: tecRecSeqFossLem2}
&&\int_{\Om}\Bigl(\frac{y}{y - 1}\Bigr)^{14}|v|^{\frac{14 - 3y}{y - 1}}[(|v_h|^{\frac{y}{y - 1}} - x)^2 - (|v|^{\frac{y}{y - 1}} - x)^2](\chi^{\be}_h(v_h)_x)^{14}dxdy \\
&&\quad \lesssim \ h^{1 + s - 14\be}\max\bigl\{1, \|v\|^{24}_{L^{\infty}(\Om)} \bigr\}.  \nonumber
    \end{eqnarray}
\end{lemma}

\begin{proof}
    Once again, we proceed by using the definition of $\chi_h^\be$, the $L^{\infty}$-stability of $K_h$ (see \eqref{property-1}), and the fact that $v \in L^{\infty}(\Om)$.
\end{proof}

This lemma comprises an analog of Lemma \ref{Lem: ErrorEnergyIV}. 

\begin{lemma}\label{Lem: tecRecSeqFossLem3}
     Let $1 \leq p < \infty$, $0 < s < 1$ and suppose $v \in Y:= \Bor \cap W^{1 + s, p}(\Om)$. Let $0 < \be < \frac{s}{13}$ be fixed, and let $v_h:=K_hv$. Then there holds
     \begin{eqnarray}\label{Eq: tecRecSeqFossLem3}
       && \int_{\Om}\Bigl(\frac{y}{y - 1}\Bigr)^{14}|v|^{\frac{14 - 3y}{y - 1}}(|v|^{\frac{y}{y - 1}} - x)^2|(\chi^{\be}_h(v_h)_x)^{14} - \chi^{\be}_h(v_x)^{14}|\, dxdy  \\
        &&\quad \lesssim h^{s - 13\be}\max\{1, \|v\|^{25}_{L^{\infty}(\Om)}\}.  \nonumber
     \end{eqnarray}
\end{lemma}

\begin{proof}
    This proof follows by using the $L^{\infty}$-stability of $K_h$ granted by \eqref{property-1} and a difference factorization.
\end{proof}

\begin{theorem}\label{Th: GaConvFoss}
     Let $0 < s < 1/3$ and $1 < p < \frac{3}{2}$ be fixed so that $(2/3 + s)p < 1$ . Assume that $\be < \min\{(1 + s)/14, s/13\}$ fixed , we have that $F^{\be}_h \xrightarrow{\Ga} F$ as $h\to 0^+$ in the strong $W^{1, p}(\Om)$-topology. 
\end{theorem}

\begin{proof}
To prove \eqref{Eq: liminfIneqCutoffFoss}, note that since \eqref{Eq: FossCutoff} is non-negative and convex in the gradient for each $h > 0$, \cite[Theorem 4.1]{schnake2017numerical} applies to obtain the lim-inf inequality. However, in this special setting, we may instead apply Fatou's Lemma straight away.

Now to prove \eqref{Eq: limsupneqCutoffFoss}, fix $v \in \Alg \cap W^{1 + s, 1}(\Om)$,  let  $v_h:=K_hv$ and $c_0:=66(13/14)^{14}$. Then we decompose $F^\beta_h(v_h)$ as
\begin{eqnarray}\label{Eq: FossDecomposition}
    \begin{aligned}
        F^{\be}_h(v_h) &:=c_0\int_{\Om}\Bigl(\frac{y}{y - 1}\Bigr)^{14}(|v_h|^{\frac{14 - 3y}{y - 1}} - |v|^{\frac{14 - 3y}{y - 1}})(|v_h|^{\frac{y}{y - 1}} - x)^2(\chi^{\al}_h(v_h)_x)^{14}\, dxdy  \\
        &\quad +c_0 \int_{\Om}\Bigl(\frac{y}{y - 1}\Bigr)^{14}|v|^{\frac{14 - 3y}{y - 1}}[(|v_h|^{\frac{y}{y - 1}} - x)^2 - (|v|^{\frac{y}{y - 1}} - x)^2](\chi^{\al}_h(v_h)_x)^{14}\,dxdy\\
        &\quad +c_0\int_{\Om}\Bigl(\frac{y}{y - 1}\Bigr)^{14}|v|^{\frac{14 - 3y}{y - 1}}(|v|^{\frac{y}{y - 1}} - x)^2[(\chi^{\al}_h(v_h)_x)^{14} - \chi^{\al}_h(v_x)^{14}\,]dxdy \\
        &\quad  + c_0\int_{\Om}\Bigl(\frac{y}{y - 1}\Bigr)^{14}|v|^{\frac{14 - 3y}{y - 1}}(|v|^{\frac{y}{y - 1}} - x)^2\chi^{\al}_h(v_x)^{14}\,dxdy.
    \end{aligned}
\end{eqnarray}
The first term on the right-hand side of \eqref{Eq: FossDecomposition} converges to $0$ due to Lemma \ref{Lem: tecRecSeqFossLem1}. The second term on the right-hand side of \eqref{Eq: FossDecomposition} converges to $0$ due to Lemma \ref{Lem: tecRecSeqFossLem2}. As for the third term, we bound it from above as
\begin{eqnarray}\label{Eq: FossDecomposition2}
     &&c_0\int_{\Om}\Bigl(\frac{y}{y - 1}\Bigr)^{14}|v|^{\frac{14 - 3y}{y - 1}}(|v|^{\frac{y}{y - 1}} - x)^2[(\chi^{\al}_h(v_h)_x)^{14} - \chi^{\al}_h(v_x)^{14}]dxdy \ \\
      &&\qquad \leq c_0 \int_{\Om}\Bigl(\frac{y}{y - 1}\Bigr)^{14}|v|^{\frac{14 - 3y}{y - 1}}(|v|^{\frac{y}{y - 1}} - x)^2|(\chi^{\al}_h(v_h)_x)^{14} - \chi^{\al}_h(v_x)^{14}|dxdy. \nonumber
\end{eqnarray}
Then the right-hand side of \eqref{Eq: FossDecomposition2} converges to $0$ due to Lemma \ref{Lem: tecRecSeqFossLem3}.

Finally, it is immediate that
\begin{equation}\label{Eq: FossDecomposition3}
    c_0\int_{\Om}\left(\frac{y}{y - 1}\right)^{14}|v|^{\frac{14 - 3y}{y - 1}}(|v|^{\frac{y}{y - 1}} - x)^2\chi^{\al}_h(v_x)^{14}dxdy \ \leq \ F(v).
\end{equation}
The proof is completed by combining \eqref{Eq: FossDecomposition}--\eqref{Eq: FossDecomposition3} and sending $h \rightarrow 0^+$.
\end{proof}

Similarly to Mani\`a's problem, we can also show a convergence result for minimum values analogous to Corollary \ref{Cor: convOfMins}.

\begin{corollary}[Convergence of minimum values]\label{Cor: ConvOfMinsFoss}
   Let $0< s<1/3$, $1 \leq p <3/2$ be such that $\left(2/3 + s\right)p < 1$, and $\overline{v}_h \in Y_h$ be the solution to the problem \eqref{Eq: enhancedFoss}, and let $\overline{v}$ be the minimizer of Foss's functional defined in \eqref{Eq: Foss}. Then there holds as $h\to 0^+$,  
    \begin{equation}\label{Eq: ConvMinsFoss}
     \lim_{h \rightarrow 0^+}  F^{\be}_h(\overline{v}_h) \ = \ F(\overline{v}).
    \end{equation}
\end{corollary}

We note that the thresholds on $s$ and $p$ required for convergence are the same as for Mani\`a's problem.

\section{Numerical experiments}\label{Sec: Numerics}

In this section,  we provide numerical experiments to verify the convergence \eqref{ConvMins} and \eqref{Eq: ConvMinsFoss}. For Mani\`a's problem (resp. Foss's problem), let $\widetilde{u}_h := \argmin_{u_h \in X_h}J(u_h)$ (let $\widetilde{v}_h := \argmin_{v_h \in Y_h}F(v_h)$), and the tables will numerically demonstrate the LGP. We also compute values of $J^{\al}_h(I_h\overline{u})$ (resp. $F^{\al}_h(I_h\overline{v})$) to demonstrate the lim-sup inequality \eqref{limsupneqCutoff} (resp. \eqref{Eq: limsupneqCutoffFoss}), where $I_h$ denotes the nodal interpolant. Finally, we show the rates of convergence between the cutoff functional and the limiting functional, as well as the $L^2$- and $W^{1, 1}$-errors between the discrete and continuous minimizers. Tables \ref{table: alphaNudgeAl = 1/4}-\ref{table: alphaNudgeAlRates = 2/7} pertain to Mani\`a's problem, while Tables \ref{table: betaNudgeAlFoss = 1/4}-\ref{table: alphaNudgeBeRates = 2/7} pertain to Foss's problem. The lack of consistent convergence rates for the errors (in both norms) can be attributed to the low regularity of minimizers for $J$ and $F$, along with the lack of coercivity of these problems.

Now we provide a few details on the implementation. In all cases, the built-in MATLAB function {\em fminunc} was used to perform the optimizations. For Mani\`a's problem, the initial guess for the optimization is $u_0(x) = x$, while for Foss's problem, it is $u_0(x, y) = x$. Additionally, for Foss's problem, the parallel processing functionality \textit{parpool} was utilized with four workers, and we performed the optimization routine in two steps: first compute $\widetilde{u}_h$ and then use this as an initial condition to compute $\overline{u}_h$.

\begin{table}
\centering
 \begin{tabular}{||l l l l l l||}
 \hline 
$h$  & $1/10$ & $1/20$  & $1/40$  & $1/80$  & $1/160$   \\ [0.3cm]
 \hline\hline 
$J(\widetilde{u}_h)$ & $0.0374372$  & $0.0325811$  & $0.0297002$ & $0.0279371$ & $ 0.0268322$  \\
 \hline
$J^{\al}_h(\overline{u}_h)$ & $0.0016898$ & $0.0006027$ & $0.0002222$ & $0.00008598$  &  $0.0000331$ \\
 \hline
 $J^{\al}_h(I_h\overline{u})$ & $0.002415$ & $0.0008631$ & $0.0003091$ & $0.00011007$ & $0.0000391$ \\
 \hline
\end{tabular}
\caption{\label{table: alphaNudgeAl = 1/4} Values for Mani\`a functional and cutoff when $\al = 1/4$}
\end{table}

\begin{table}
\centering
 \begin{tabular}{||l l l l l l||}
 \hline 
$h$  & $1/10$ & $1/20$  & $1/40$  & $1/80$  & $1/160$  \\ [0.3cm]
 \hline\hline 
$J(\widetilde{u}_h)$ & $0.0374372$  & $0.0325811$  & $0.0297002$ & $0.0279371$ & $ 0.0268322$ \\
 \hline
$J^{\al}_h(\overline{u}_h)$ & $0.0374372$ & $0.0325811$ & $0.000477$  & $0.0002092$ & $0.0000937$ \\
 \hline
 $J^{\al}_h(I_h\overline{u})$ & $0.0039519$ & $0.0016299$& $0.0006802$ & $0.0002807$ & $0.0001157$\\
 \hline
\end{tabular}
\caption{\label{table: alphaNudgeAl = 2/5} Values for Mani\`a functional and cutoff when $\al = 2/7$ }
\end{table}

\begin{table}
\centering
 \begin{tabular}{||l l l l l l||}
 \hline 
$h$  & $1/10$ & $1/20$  & $1/40$  & $1/80$  & $1/160$   \\ [0.3cm]
 \hline\hline 
$|J^{\al}_h(\overline{u}_h) - J(\overline{u})|$ & $0.0016898$ & $0.0006027$ & $0.0002222$ & $0.00008598$  &  $0.00003314$\\
 \hline
Rate 
& $-$ & $1.48782$  & $1.43963$& $1.36974$  &  $1.37540$ \\
 \hline
 $\|\overline{u}_h - \overline{u}\|_{L^2}$ & $0.0364538$ & $0.0204453$ & $0.0114443$ & $0.00640736$ & $0.00382858$\\
 \hline
 Rate 
 & $-$& $0.834300$ & $0.837140$ & $0.836827$ & $0.742921$ \\
 \hline
 $\|\overline{u}_h - \overline{u}\|_{W^{1, 1}}$ & $0.0248857$ & $0.0103673$ & $0.004696$& $0.00227012$ & $0.00070888$\\
 \hline
 Rate 
 & $-$& $1.26328$  & $1.14253$& $1.04867$ & $1.67916$ \\
 \hline
\end{tabular}
\caption{\label{table: alphaNudgeAlRates = 1/4} Error measurements for Mani\`a functional and cutoff when $\al = 1/4$ }
\end{table}

\begin{table}
\centering
 \begin{tabular}{||l l l l l l||}
 \hline 
$h$  & $1/10$ & $1/20$  & $1/40$  & $1/80$  & $1/160$   \\ [0.3cm]
 \hline\hline 
$|J^{\al}_h(\overline{u}_h) - J(\overline{u})|$ & $0.0374372$ & $0.032581$ & $0.000477$  & $0.0002092$ & $0.00009367$ \\
 \hline
Rate 
& $-$  & $0.200437$  & $6.09387$ & $1.18923$ & $1.15915$  \\
 \hline
 $\|\overline{u}_h - \overline{u}\|_{L^2}$ & $0.215934$ & $0.207333$ & $0.0114628$& $0.0064123$ & $0.00360301$ \\
 \hline
 Rate 
 & $-$& $0.058641$& $4.17692$ & $0.838046$ & $0.831639$ \\
 \hline
 $\|\overline{u}_h - \overline{u}\|_{W^{1, 1}}$ & $0.256652$ & $0.219786$ & $0.0045039$& $0.0021616$ & $0.00110797$\\
 \hline
 Rate 
 & $-$& $0.223714$ & $5.60880$ & $1.05907$ & $0.964174$ \\
 \hline
\end{tabular}
\caption{\label{table: alphaNudgeAlRates = 2/7} Error measurements for Mani\`a functional and cutoff when $\al = 2/7$}
\end{table}

\begin{table}
\centering
 \begin{tabular}{||l l l l||}
 \hline 
$h$  & $1/6$ & $1/12$  & $1/24$  \\ [0.3cm]
 \hline\hline 
$F(\widetilde{v}_h)$ & $14.4843$  & $5.71926$ & $3.21556$  \\
 \hline
$F^{\be}_h(\overline{v}_h)$ & $3.74796$ & $0.190786$ &  $0.0121433$  \\
 \hline
 $F^{\be}_h(I_h\overline{v})$ & $2.17234$ & $0.298273$ & $0.0399395$ \\
 \hline
\end{tabular}
\caption{\label{table: betaNudgeAlFoss = 1/4}Values for Foss functional and cutoff when $\be = 1/4$ }
\end{table}

\begin{table}
\centering
 \begin{tabular}{||l l l l||}
 \hline 
$h$   & $1/6$ & $1/12$  & $1/24$  \\ [0.3cm]
 \hline\hline 
$F(\widetilde{v}_h)$ & $14.4843$  & $5.71926$ & $3.21556$   \\
 \hline
$F^{\be}_h(\overline{v}_h)$ & $7.08834$& $1.53267$  & $0.527008$ \\
 \hline
 $F^{\be}_h(I_h\overline{v})$ & $3.86372$ & $0.762079$ & $0.149960$ \\
 \hline
\end{tabular}
\caption{\label{table: betaNudgeAlFoss = 2/7} Values for Foss functional and cutoff when $\be = 2/7$ }
\end{table}

\begin{table}
\centering
 \begin{tabular}{||l l l l||}
 \hline 
$h$  & $1/6$ & $1/12$ & $1/24$    \\ [0.3cm]
 \hline\hline 
$|F^{\be}_h(\overline{v}_h) - F(\overline{v})|$ & $3.74796$ & $0.190786$ &  $0.0121433$  \\
 \hline
Rate 
& $-$  & $4.29608$ & $3.97372$   \\
 \hline
 $\|\overline{v}_h - \overline{v}\|_{L^2}$ & $0.233337$ & $0.112987$ & $0.0567265$  \\
 \hline
 Rate 
 & $-$& $1.04626$& $0.994062$  \\
 \hline
  $\|\overline{v}_h - \overline{v}\|_{W^{1, 1}}$ &  $0.429554$ & $0.327292$& $0.286032$ \\
 \hline
 Rate 
 & $-$ & $0.392261$& $0.194402$  \\
 \hline
\end{tabular}
\caption{\label{table: alphaNudgeBeRates = 1/4} Error measurements for Foss functional and cutoff when $\be = 1/4$}
\end{table}

\begin{table}
\centering
 \begin{tabular}{||l l l l||}
 \hline 
$h$  & $1/6$ & $1/12$ & $1/24$   \\ [0.3cm]
 \hline\hline 
$|F^{\be}_h(\overline{v}_h) - F(\overline{v})|$ & $7.08834$ & $1.53266$ & $0.527008$  \\
 \hline
Rate 
& $-$  & $2.20941$  & $1.54014$  \\
 \hline
 $\|\overline{v}_h - \overline{v}\|_{L^2}$ & $0.216776$ & $0.110129$  & $0.0582341$ \\
 \hline
 Rate 
 & $-$ & $0.977011$ & $0.919258$ \\
 \hline
 $\|\overline{v}_h - \overline{v}\|_{W^{1, 1}}$ & $0.429245$ & $0.332017$ & $0.291064$ \\
 \hline
 Rate 
 & $-$ & $0.370544$ & $0.189921$ \\
 \hline
\end{tabular}
\caption{\label{table: alphaNudgeBeRates = 2/7} Error measurements for Foss functional and cutoff when $\be = 2/7$}
\end{table}

In the results of Tables \ref{table: alphaNudgeAl = 1/4}-\ref{table: alphaNudgeAlRates = 2/7}, if there was no LGP,  then $J(\cdot)$ and $J^{\al}_h(\cdot)$ would have similar convergence behaviors for all values of $\al$. Therefore, we opt to tune $\al$ in our numerical results (and similarly for $\be$ in the 2D problem; see Tables \ref{table: betaNudgeAlFoss = 1/4}-\ref{table: alphaNudgeBeRates = 2/7}). Moreover, the numerical method does not require a priori knowledge about the minimizer; it only requires that the minimizer has a slightly higher differentiability than that inferred by the space $W^{1, 1}(0, 1)$. Finally, we note that additional numerical results for both problems are given in \cite{feng2016enhanced, schnake2017numerical}, including demonstrations that using higher-order finite element methods does not improve the convergence properties of these problems.





\section{Conclusion}\label{Sec: future}

In this paper, we presented two enhanced finite element methods for two calculus of variations problems, namely,  Mani\`a's problem and Foss's problem, both of which suffer from the Lavrentiev Gap Phenomenon (LGP),  and established complete $\Gamma$-convergence proofs for both methods. 
To construct the recovery sequence, we used an abstract interpolation-like numerical operator $K_h$ that needs to satisfy a criterion consisting of certain stability and approximation properties.  The convergence proofs may provide a blueprint or framework for developing numerical methods and establishing the $\Ga$-convergence for other calculus of variations problems with the LGP.  

A natural next step is to generalize the described framework to other problems with the Lavrentiev Gap in higher dimensions, such as the elasticity functional described in \cite{almi2024new}, and the two-dimensional version of the Mani\`a functional described in \cite{caetano2004example}. One can also try juxtaposing the cutoff functional with non-standard finite element methods, such as Discontinuous Galerkin (DG) methods.

Finally, there is the possibility of studying nonlocal analogues of the local functionals with the LGP by choosing these functionals to converge to the corresponding local functional in the sense of Brezis-Bourgain-Mironescu \cite{bourgain2001another}. We conjecture that using such constructions has the ability to remove the Lavrentiev Gap, enabling the use of standard finite elements for these types of problems. 


\section*{Acknowledgements}

The work of the first author was partially supported by the NSF grant DMS-2309626.  
Both authors would like to thank Professor Abner Salgado 
 for providing valuable comments on earlier drafts of this paper.



\appendix 
\section*{} \label{appendixA}
We provide a proof of the fractional-order differentiability for the minimizer $\overline{v}$ of  Foss's problem. We note that the condition $sp < 1$ used earlier in the paper is less stringent than the conditions required for this result.
 
\begin{proposition} \label{Lem: FracMin2-x}
	The function $\overline{v}(x, y) := x^{\frac{y - 1}{y}}$ belongs to $W^{1 + s, p}(\Om)$ for $1 \leq p < \frac{3}{2}$ and $s < \frac{1}{p} - \frac{2}{3}$.
\end{proposition}

\begin{proof}
	Fix $0 \leq s < 1$ and $1 \leq p < \infty$ so that $s(p + 2/3) < 1$,  it suffices to show that the partial derivatives $\overline{v}_x$ and $\overline{v}_y$  of $\overline{v}$ satisfy the following four claims:
	\[
		(a)\,  \overline{v}_x \in L^p(\Om),\quad 
		 (b)\, \overline{v}_x \in W^{s, p}(\Om), \quad
		 (c)\,  \overline{v}_y \in L^p(\Om), \quad 
		 (d)\,  \overline{v}_y \in W^{1, p}(\Om). 
	\]
	
	To prove (a),  we simply estimate
	\begin{equation}\label{Eq: FracMinUxEq1-x}
		\|\overline{v}_x\|^p_{L^p(\Om)} \ \lesssim \ \int_{\Om}|x^{-\frac{1}{y}}|^pdxdy \ < \ \infty.
	\end{equation}
	
	To prove (b), we use Fubini's Theorem and a change of variables to obtain the estimate 
	\begin{eqnarray}\label{Eq: FracMinS2Eq1-y}
		\begin{aligned}
			&\iint_{\Om \times \Om}\frac{|\overline{v}_x(x, y) - \overline{v}_x(\widetilde{x}, y)|^p}{|(x, y) - (\widetilde{x}, \widetilde{y})|^{2 + sp}}d(x, y)d(\widetilde{x}, \widetilde{y}) \\
			&\qquad  \lesssim \int^1_0\int^{5/2}_{3/2}\int^1_0\frac{|x^{-1/y} - \widetilde{x}^{-1/y}|^p}{|x - \widetilde{x}|^{1 + sp}}d\widetilde{x}dydx \\
			&\qquad  \leq\int^1_0\int^1_0\frac{|x^{-2/3} - \widetilde{x}^{-2/3}|^p}{|x - \widetilde{x}|^{1 + sp}}d\widetilde{x}dx \ < \ \infty,
		\end{aligned}
	\end{eqnarray}
	where we have used that $x^{-2/3} \in W^{s, p}(0, 1)$ when $s < \frac{1}{p} - \frac{2}{3}$. We also require a second estimate, following from the Mean-Value Theorem: 
	\begin{eqnarray}\label{Eq: FracMinS2Eq2-2}
		\begin{aligned}
			&\iint_{\Om \times \Om}\frac{|\overline{v}_x(\widetilde{x}, y) - \overline{v}_x(\widetilde{x}, \widetilde{y})|^p}{|(x, y) - (\widetilde{x}, \widetilde{y})|^{2 + sp}}d(x, y)d(\widetilde{x}, \widetilde{y}) \\
			&\qquad \lesssim \iint_{\Om \times \Om}\frac{|\ln(\widetilde{x})|^p\widetilde{x}^{-2p/3}|y - \widetilde{y}|^p} {|(x, y) - (\widetilde{x}, \widetilde{y})|^{2 + sp}}d(x, y)d(\widetilde{x}, \widetilde{y}) \\
			&\qquad  \leq \iint_{\Om \times \Om}\frac{|\ln(\widetilde{x})|^p\widetilde{x}^{-2p/3}|y - \widetilde{y}|^p}{|y - \widetilde{y}|^{2 + sp}}d(x, y)d(\widetilde{x}, \widetilde{y}) \\
			&\qquad  = \left(\int^1_0|\ln(\widetilde{x})|^p\widetilde{x}^{-2p/3}dx\right)\left(\int^{5/2}_{3/2}\int^{5/2}_{3/2}\frac{1}{|y - \widetilde{y}|^{2 + (s - 1)p}}d\widetilde{y}dy\right) \ < \ \infty.
		\end{aligned}
	\end{eqnarray}
	Finally, combining \eqref{Eq: FracMinS2Eq1-y} and \eqref{Eq: FracMinS2Eq2-2}, we  obtain
	\begin{eqnarray}\label{Eq: FracMinS2Eq3-z}
		\begin{aligned}
			[\overline{v}_x]^p_{W^{s, p}(\Om)}  &\lesssim  \iint_{\Om \times \Om}\frac{|\overline{v}_x(x, y) - \overline{v}_x(\widetilde{x}, y)|^p}{|(x, y) - (\widetilde{x}, \widetilde{y})|^{2 + sp}}d(x, y)d(\widetilde{x}, \widetilde{y})  \\ 
			&\qquad +\iint_{\Om \times \Om}\frac{|\overline{v}_x(\widetilde{x}, y) - \overline{v}_x(\widetilde{x}, \widetilde{y})|^p}{|(x, y) - (\widetilde{x}, \widetilde{y})|^{2 + sp}}d(x, y)d(\widetilde{x}, \widetilde{y}) \ < \ \infty.
		\end{aligned}
	\end{eqnarray}
	Thus, (b) holds.
	
	Assertion (c) follows from the integrability of any positive power of the natural logarithm function in 1D and $x^{\frac{y-1}{y}} \in L^{\infty}(\Om)$.
	
	Finally,  assertion (d) immediately follows from the following calculations:
	\[
	\overline{v}_{yx}(x, y) = \frac{2x^{-\frac{1}{y}}y - \ln(x)x^{-\frac{1}{y}}}{y^3},
	\qquad \overline{v}_{yy}(x, y) = \frac{\ln(x)^2x^{\frac{y - 1}{y}}}{y^4} - \frac{2\ln(x)x^{\frac{y - 1}{y}}}{y^3}.
	\]
	The proof is complete. 
\end{proof}

\begin{remark}\label{Rmk: SameThreshold-x}
	The fractional differentiability and integrability thresholds for the minimizers of \eqref{Eq: Mania} and \eqref{Eq: Foss} are the same, apart from the number of dimensions. This further explains why the convergence analysis for the two problems is similar.
\end{remark}

\end{document}